\spnewtheorem{result}[theorem]{Result}{\bfseries}{\rmfamily}
\newcommand{\Z}{\mathbb{Z}}
\newcommand{\F}{\mathbb{F}}
\newcommand{\C}{\mathbb{C}}
\newcommand{\Aut}{{\rm Aut}}
\newcommand{\Hom}{{\rm Hom}}
\newcommand{\lcm}{{\rm lcm}}
\newcommand{\pointy}[1]{{\langle#1\rangle}}
\newcommand{\la}{\lambda}
\newcommand{\sig}{\sigma}
\newcommand{\Tr}{\text{Tr}}
\newcommand{\zn}{\zeta_n}
\newcommand{\wh}{\widehat}
\newcommand{\wti}{\widetilde}
\newcommand{\z}{\zeta_{15}}
\begin{document}

\title*{Group rings and character sums: \\tricks of the trade}
\titlerunning{Group rings and character sums} 
\author{Jonathan Jedwab and Shuxing Li}
\institute{Jonathan Jedwab \at Department of Mathematics,
Simon Fraser University, 8888 University Drive, Burnaby BC V5A 1S6, Canada,
\email{jed@sfu.ca}
\and
Shuxing Li \at Department of Mathematics,
Simon Fraser University, 8888 University Drive, Burnaby BC V5A 1S6, Canada,
\email{shuxing_li@sfu.ca} 
}

\maketitle

\begin{center}{\sl Dedicated to Doug Stinson on the occasion of his 66th birthday.}\end{center}

\begin{center}
15 November 2022 (revised 9 February 2023)
\end{center}

\abstract{The combination of the group ring setting with the methods of character theory allows an elegant and powerful analysis of various combinatorial structures, via their character sums. These combinatorial structures include difference sets, relative difference sets, partial difference sets, bent functions, hyperplanes, spreads, and LP-packings.
However, the literature on these techniques often relies on peculiar conventions and implicit understandings that are not always readily accessible to those new to the subject.
While there are many excellent advanced sources describing these techniques, 
we are not aware of an expository paper at the introductory level
that articulates the commonly used ``tricks of the trade''.
We attempt to remedy this situation by means of illustrative examples, explicit discussion of conventions, and instructive proofs of fundamental results.
}

\keywords{group ring, character sum, abelian group, difference set, relative difference set, partial difference set, bent function, hyperplane, spread, LP-packing}

\section*{Overview}
The combination of character theory and group rings provides
a powerful tool for analyzing a great variety of discrete structures occurring in 
design theory \cite{DJ,GS,JS97,JS98,LS,Ma,Pott96,Sch99}, 
discrete geometry \cite{LPR}, 
finite geometry \cite{GJ03,GJ06,MWX}, 
graph theory \cite{Ma,MWX}, 
highly nonlinear functions \cite{Pott04,Pott16}, 
and number theory~\cite{LL}. 
There are many excellent advanced sources describing these techniques, including \cite[Chapter VI]{BJL}, \cite{J}, \cite{JPNATO},  \cite{Pott95}, and \cite{Sch02}. 
However, those unfamiliar with the subject face the obstacle that the literature often relies on peculiar conventions and implicit understandings that are typically mentioned only briefly.
Our intention in this introductory expository paper is to communicate the commonly used ``tricks of the trade'' by means of illustrative examples, explicit discussion of conventions, and instructive proofs of fundamental results. 

In Section~\ref{sec-groupring} we introduce the group ring and show how to concisely express a difference set, a relative difference set, and a partial difference set in this setting.  
In Section~\ref{sec-character} we define the characters of an abelian group using an explicit computational approach.
The combination of a character with a group ring element gives a character sum, 
and we show via extended examples how difference sets and their variants can be characterized in terms of their character sums.
We develop the required fundamental results of character theory as needed, using the proofs to bring to light details and potential pitfalls often omitted. 
In Section~\ref{sec-subsets} we use character sums to characterize certain collections of subsets of an abelian group whose mutual properties play a 
fundamental role in the construction of difference sets and related structures.
These collections are:
the hyperplanes of an elementary abelian group;
a spread of an elementary abelian group; 
and an LP-packing of partial difference sets in an abelian group.

\section{Group ring}\label{sec-groupring}
Throughout this paper, we consider only finite groups. 
The definitions in this section are not restricted to abelian groups (although the examples are).
We write the identity of the group $G$ as~$1_G$.
We use the notation $\{\{ \dots \}\}$ to represent the elements of a multiset. 
We begin with a compact way of representing a set or multiset of elements of a group. 

\begin{example}[Set]\label{ex-set}
Let $\Z_3 \times \Z_3 = \pointy{ (1,0), (0,1) }$,
and let $A = \{ (0,0), (1,2), (2,1) \}$ be a set of elements of~$\Z_3 \times \Z_3$. We represent the set $A$ by the expression $(0,0)+(1,2)+(2,1)$, 
in which the symbol `$+$' is used to concatenate the elements of~$A$ in a formal sum but does not refer to the addition operation of the group $\Z_3 \times \Z_3$.
It is (perhaps unfortunately) common to abuse notation by writing $A = (0,0)+(1,2)+(2,1)$, using the same symbol $A$ for the expression $(0,0)+(1,2)+(2,1)$ as for the associated set.
\end{example}

\begin{example}[Multiset]\label{ex-multiset}
Let $\Z_7 = \pointy{g}$, 
and let $B = \{\{ 1_{\Z_7}, 1_{\Z_7}, 1_{\Z_7}, g, g^4, g^4, g^5 \}\}$ be a multiset of elements of $\Z_7$.
We represent the multiset $B$ by the expression
\[
1_{\Z_7} + 1_{\Z_7} + 1_{\Z_7} + g + g^4 + g^4 + g^5 = 3 \cdot 1_{\Z_7} + g + 2g^4 + g^5, 
\]
using the symbol $+$ for concatenation of elements as before, and also
combining the three copies of $1_{\Z_7}$ as $3 \cdot 1_{\Z_7}$ and the two copies of $g^4$ as~$2g^4$.
Since there is no confusion with the identity element of a group other than~$\Z_7$, we may replace $1_{\Z_7}$ by $1$ in this expression so that $3 \cdot 1_{\Z_7}$ is replaced by~$3 \cdot 1 = 3$ to give $3 + g + 2g^4 + g^5$. 
\end{example}

The use of a formal sum to represent a set or multiset of group elements can be formalized in the following way. Let $\Z$ be the integer ring and let $G$ be a group. The \emph{group ring} $\Z[G]$ is a $\Z$-module with a basis consisting of all elements of $G$, so that
$$
\Z[G]=\Big\{ \sum_{g \in G} a_g g \mid a_g \in \Z \Big\}. 
$$   
The integer $a_g$ is called the \emph{coefficient} of $g$ in the formal sum $\sum_{g \in G} a_g g$. 
We shall see that the algebraic structure of the group ring $\Z[G]$ provides a powerful means of studying (multi)sets of elements of~$G$.
Although negative coefficients $a_g$ are permitted in $\Z[G]$, we shall consider only group ring elements whose coefficients are all non-negative in order to maintain the association with multisets of elements of~$G$. 
The group operation of $G$ is sometimes written additively (as in Example~\ref{ex-set}) and sometimes multiplicatively (as in Example~\ref{ex-multiset}), according to context.

We firstly show that the sum and product of elements of the group ring $\Z[G]$ have a natural interpretation in terms of their associated multisets.

\begin{example}[Sum and product]\label{ex-sumproduct}
Let $G=\Z_4 \times \Z_9=\pointy{ x, y }$ and let $A, B$ be the multisets of elements of $G$ given by 
\begin{align*}
A&=3x+2x^3y^8+y^6, \\
B&=2+x+5x^2y^7
\end{align*}
(where, as mentioned above, we abuse notation by not distinguishing between a multiset and its associated group ring element in $\Z[G]$).

The sum of $A$ and $B$ in $\Z[G]$ is the group ring element
\begin{align*}
A + B 
 &= (3x+2x^3y^8+y^6) + (2+x+5x^2y^7) \\
 &= 2 + 4 x + y^6 + 5 x^2 y^7 + 2 x^3 y^8,
\end{align*}
whose associated multiset is the multiset union $A \cup B$. 

The product of $A$ and $B$ in $\Z[G]$ is the group ring element
\begin{align*}
AB 
 &= (3x+2x^3y^8+y^6) (2+x+5x^2y^7) \\
 &= 6x + 4x^3 y^8 + 2y^6 + 
    3x^2 + 2x^4 y^8 + xy^6 + 
    15x^3 y^7 + 10x^5 y^{15} + 5x^2 y^{13} \\
 &= 6x + 4x^3 y^8 + 2 y^6 + 
    3 x^2 + 2 y^8 + x y^6 +
    15 x^3 y^7 + 10 x y^6 + 5 x^2 y^4 \\ 
 &= 6x + 4x^3 y^8 + 2 y^6 + 
    3 x^2 + 2 y^8 + 11x y^6 +
    15 x^3 y^7 + 5 x^2 y^4,
\end{align*}
obtained by expanding the formal product and then applying the group relations $x^4 = y^9 = 1$. The associated multiset is the sumset $\{\{ ab \mid a \in A,\, b \in B\}\}$. 
\end{example}

Example~\ref{ex-sumproduct} illustrates how to form the sum and product of elements of the group ring~$\Z[G]$. In general, for elements $A=\sum_{g \in G} a_g g$ and $B=\sum_{g \in G} b_g g$ of $\Z[G]$, we have
\begin{align*}
A+B &= \big( \sum_{g \in G} a_g g \big ) + \big ( \sum_{g \in G} b_g g \big) 
     = \sum_{g \in G} (a_g+b_g)g, \\
AB  &= \big( \sum_{k \in G} a_k k \big) \big( \sum_{h \in G} b_h h \big) 
     = \sum_{h \in G} \sum_{k \in G} a_k b_h k h 
     = \sum_{h \in G} \sum_{g \in G} a_{gh^{-1}} b_h g \\
    &= \sum_{g \in G} \big( \sum_{h \in G} a_{gh^{-1}} b_h \big) g.
\end{align*}

We shall see that replacing each element of a multiset by its group inverse is very useful in describing difference sets and their variants. We illustrate this process in the following example and then introduce a general definition.
\begin{example}
Consider the multiset $B = \{\{1, 1, 1, g, g^4, g^4, g^5\}\}$ of $\Z_7 = \pointy{g}$ given in Example~\ref{ex-multiset}. Replacing each element of $B$ by its group inverse gives the multiset $\{\{1, 1, 1, g^6, g^3, g^3, g^2\}\}$. The corresponding group ring elements are $B = 3 + g + 2g^4 + g^5$ and $B^{(-1)} = 3+g^6+2g^3+g^2$. 
Note that to form $B^{(-1)}$ directly from the group ring element $B$, we replace each group element in $B$ by its inverse but leave each coefficient unchanged.

The group ring element $A = (0,0) + (1,2) + (2,1)$ of $\Z[\Z_3 \times \Z_3]$ given in Example~\ref{ex-set} likewise has 
$A^{(-1)} = (-(0,0)) + (-(1,2)) + (-(2,1)) = (0,0) + (2,1) + (1,2)$, so in this case $A^{(-1)} = A$. Note that the symbol `$-$' in this calculation means the inverse in the group $\Z_3 \times \Z_3$, and not the group ring coefficient $-1$.
\end{example}

In general, let
$A = \sum_{g \in G} a_g g$ be an element of the group ring $\Z[G]$. Then
$A^{(-1)}$ is the element $\sum_{g \in G} a_g g^{-1}$ of $\Z[G]$.
We now show how the group ring element $A^{(-1)}$ allows a concise characterization of a difference set, a relative difference set, and a partial difference set.

\begin{example}[Difference set]\label{ex-diffset}
Let $G = \Z_{15}=\pointy{g}$ and let $A=\{ g, g^2, g^3, g^5, g^6, g^9, g^{11} \}\subset~G$. A straightforward calculation shows that the multiset 
\[
\{\{ xy^{-1} \mid x,y \in A,\, x \ne y \}\} = \{\{ g(g^2)^{-1}, g(g^3)^{-1},\dots g^{11}(g^9)^{-1} \}\}
\]
of ``differences'' of distinct elements of $A$ comprises each of 
the group elements $g, g^2, g^3, \dots, g^{14}$ exactly 3 times. 
By the definition of $A^{(-1)}$, the multiset $\{\{ xy^{-1} \mid x,y \in A \}\}$ 
of differences of (not necessarily distinct) elements of $A$ can be represented as $A A^{(-1)}$. The observed property of $A$ can therefore be concisely represented as
\begin{align*}
A A^{(-1)}
 &= \{\{ xx^{-1} \mid x \in A \}\} \, \bigcup \, \{\{ xy^{-1} \mid x,y \in A,\, x \ne y \}\} \\
 &= 7 \cdot 1_G + 3(g+g^2+g^3+\dots+g^{14}).
\end{align*}
Following the usual abuse of notation, the entire group 
$G = \{1_G,g,g^2, \dots, g^{14}\}$ is represented by the group ring element
$G = 1_G+g+g^2+ \dots + g^{14}$ so we obtain
\[
A A^{(-1)} = 7 \cdot 1_G + 3(G-1_G) = 7 + 3(G-1) \quad \mbox{in $\Z[G]$},
\]
in which the identity of $G$ appears 7 times and each nonidentity element appears exactly 3 times.
The subset $A$ is called a $(15,7,3)$ difference set in $G$: the group $G$ has order 15, the subset $A$ has size 7, and each nonidentity element appears exactly 3 times in the multiset of differences. When the group is written in multiplicative notation, as here, a more appropriate name might be ``quotient set''; however, the name ``difference set'' has been preserved ever since its introduction in relation to abelian groups written in additive notation.

In general, let $G$ be a group of order~$v$ and let $D$ be a $k$-subset of~$G$.
The subset $D$ is a \emph{$(v,k,\la)$ difference set} in $G$ if the multiset of differences of distinct elements of $D$ contains each nonidentity element of $G$ exactly $\la$ times.
In group ring notation, this is equivalent to
\begin{equation}\label{eqn-diffset}
D D^{(-1)} = k + \la (G-1) \quad \mbox{in $\Z[G]$}.
\end{equation}
A simple counting argument shows that the parameters $v,k,\la$ must satisfy 
\[
k(k-1)=\la(v-1).
\]
\end{example}

\begin{example}[Relative difference set]\label{ex-reldiffset}
Let $G=\Z_4 \times \Z_4=\pointy{a, b}$, let $N=\pointy{ a^2, b^2 }$ be the unique subgroup of $G$ isomorphic to $\Z_2 \times \Z_2$, and let $B=\{ 1, a, b, a^3b^3 \} \subset G$. 
The multiset of differences of distinct elements of $B$ is
\begin{align*}
\{\{ xy^{-1} \mid x,y \in B,\, x \ne y \}\} 
 &= \{\{ 
1(a)^{-1}, 1(b)^{-1}, \dots, a^3b^3(b)^{-1}  \}\} \\
 &= \{\{ a^3, b^3, ab, a, ab^3, a^2b, b, a^3b, ab^2, a^3b^3, a^2b^3, a^3b^2  \}\} \\
 &= G \setminus \{1,a^2,b^2,a^2b^2\} \\
 &= G \setminus N,
\end{align*}
comprising each element of $G \setminus N$ exactly once.
In group ring notation, this property can be represented as
\begin{align*}
B B^{(-1)}
 &= \{\{ xx^{-1} \mid x \in B \}\} \, \bigcup \, \{\{ xy^{-1} \mid x,y \in B,\, x \ne y \}\} \\
 &= 4 + (G-N) \quad \mbox{in $\Z[G]$},
\end{align*}
where we may write the group ring element corresponding to the set 
$G \setminus N$ as $G-N$ because $N$ is a subset of $G$. (Likewise, the multiset difference $S \setminus T$ of multisets $S$ and $T$ can be written as $S-T$ in the group ring, provided that $T$ is a multi-subset of~$S$.)
The subset $B$ is called a relative difference set in $G$ relative to~$N$.

In general, let $G$ be a group of order~$mn$, let $N$ be a subgroup of $G$ of order~$n$, and let $R$ be a $k$-subset of~$G$.
The subset $R$ is an \emph{$(m,n,k,\la)$ relative difference set} in $G$ relative to $N$ if the multiset of differences of distinct elements of $R$ contains each element of $G \setminus N$ exactly $\la$ times. 
In group ring notation, this is equivalent to
\begin{equation}\label{eqn-reldiffset}
R R^{(-1)} = k + \la (G-N) \quad \mbox{in $\Z[G]$}.
\end{equation}
The subgroup $N$ is called the \emph{forbidden subgroup}, because none of its nonidentity elements appears in the multiset of differences.
A simple counting argument shows that the parameters $m,n,k,\la$ must satisfy 
\[
k(k-1)=\la n(m-1).
\] 
The subset $B$ described above is a
$(4,4,4,1)$ relative difference set in $\Z_4 \times \Z_4 = \pointy{a,b}$ relative to the subgroup $\pointy{a^2,b^2} \cong \Z_2 \times \Z_2$.

The special case of an $(m,1,k,\la)$ relative difference set in $G$ relative to the trivial subgroup $\{1_G\}$ reduces to a $(m,k,\la)$ difference set in~$G$.
\end{example}

\begin{example}[Partial difference set]\label{ex-partialdiffset}
Let $G=\Z_3 \times \Z_3=\pointy{ x, y }$, and let $C=\{ x, x^2, xy, x^2y^2 \} \subset G$. 
The multiset of differences of distinct elements of $C$ is
\begin{align*}
        & \{\{ x^2, y^2, x^2y, x, xy^2, y, y, x^2y, x^2y^2, x y^2, y^2, xy \}\} \\
 = \,\, & \{\{ x, x^2, xy ,x^2y^2\}\} \, \bigcup \, \{\{ y, y, x^2y, x^2y, y^2, y^2, xy^2, xy^2 \}\},
\end{align*}
comprising each element of $C$ exactly once and each nonidentity element of $G \setminus C$ exactly twice.
In group ring notation, we have
\begin{align*}
C C^{(-1)} 
 &= 4 + (x + x^2 + xy + x^2y^2) + 2(y + x^2y + y^2 + xy^2) \\
 &= 4 + C + 2(G - 1 - C) \quad \mbox{in $\Z[G]$}.
\end{align*}
The subset $C$ is called a $(9,4,1,2)$ partial difference set in $G$:
the group $G$ has order $9$, the subset $C$ has size $4$, and the parameters $1$ and $2$ occur as multiplicities in the multiset of differences.

In general, let $G$ be a group of order~$v$ and let $D$ be a $k$-subset of~$G$ not containing~$1_G$.
The subset $D$ is a \emph{$(v,k,\la,\mu)$ partial difference set} in $G$ if the multiset of differences of distinct elements of $D$ contains each element of $D$ exactly $\la$ times and each nonidentity element of $G \setminus D$ exactly $\mu$ times. 
Equivalently,
\begin{equation}\label{eqn-partialdiffset}
D D^{(-1)} = k + \la D + \mu (G-1-D) \quad \mbox{in $\Z[G]$}.
\end{equation}
(The condition $1_G \notin D$ is not restrictive: see \cite[p.~222]{Ma}.)
A simple counting argument shows that the parameters $v,k,\la,\mu$ must satisfy \[
k(k-1)=\la k + \mu(v-1-k).
\]
For an arbitrary $g \in G$ and group ring element $A$ in $\Z[G]$, elements $g$ and $g^{-1}$ have equal multiplicity in the multiset 
$A A^{(-1)} = \{\{ xy^{-1} \mid x, y \in A\}\}$ because $xy^{-1} = g$ holds if and only if $yx^{-1} = g^{-1}$. Therefore from \eqref{eqn-partialdiffset} we see
\cite[Prop.~1.2]{Ma} that
\[
\la \ne \mu \implies D^{(-1)} = D.
\]
In the case $\la = \mu$ the partial difference set reduces to a $(v,k,\la)$ difference set in~$G$, but this does not imply $D^{(-1)} \ne D$: when $G$ is the elementary abelian $2$-group, there are examples of partial difference sets $D$ with $\la = \mu$ \cite{Dillon} that necessarily satisfy $D^{(-1)} = D$.
\end{example}

The group ring can also be used to show that the projection of a relative difference to a quotient group is another relative difference set, and to constrain the projection of a difference set.

\begin{example}[Projection of relative difference set]\label{ex-projreldiffset}
Consider the $(4,4,4,1)$ relative difference set $B = \{1,a,b,a^3b^3\}$ in $G = \Z_4 \times \Z_4 = \pointy{a,b}$ relative to the subgroup $N = \pointy{a^2,b^2} \cong \Z_2 \times \Z_2$ given in Example~\ref{ex-reldiffset}.
Then $U=\pointy{ a^2 } \cong \Z_2$ is a normal subgroup of $G$ and of~$N$, so we may form
the quotient groups $G/U = \{U, aU, bU, abU, b^2U, ab^2U, b^3U, ab^3U\} \cong \Z_2 \times \Z_4$ and $N/U = \{U, b^2U\} \cong \Z_2$.

Let $\rho: G \rightarrow G/U$ be the canonical projection given by $\rho(g) = gU$. In the group ring $\Z[G]$ we have $B = 1+a+b+a^3b^3$, and in the group ring $\Z[G/U]$ we have
\begin{align*}
\rho(B) 
 &= \rho(1+a+b+a^3b^3) \\
 &= \rho(1) + \rho(a) + \rho(b) + \rho(a^3b^3) \\
 &= U + aU + bU + ab^3U.
\end{align*}
We then calculate in $\Z[G/U]$ that
\begin{align*}
\rho(B) \rho(B)^{(-1)} 
 &= (U + aU + bU + ab^3U)(U + aU + b^3U + abU) \\
 &= 4U + 2(aU + bU + abU + ab^2U + b^3U + ab^3U) \\
 &= 4 + 2(G/U-N/U)
\end{align*}
using that $4U = 4 \cdot 1_{G/U} = 4$ in $\Z[G/U]$.
Therefore, by \eqref{eqn-reldiffset} we see that the subset $\rho(B)$ of $G/U$ is a $(4,2,4,2)$ relative difference set in $G/U \cong \Z_2 \times \Z_4$ relative to $N/U \cong \Z_2$.
\end{example}

\begin{example}[Projection of difference set]\label{ex-projdiffset}
Consider the $(15,7,3)$ difference set
$A=\{ g, g^2, g^3, g^5, g^6, g^9, g^{11} \}$ 
in $G=\Z_{15}=\pointy{ g }$
given in Example~\ref{ex-diffset}.
Then $U=\pointy{ g^5 } \cong \Z_3$ is a normal subgroup of $G$, so we may form
the quotient group $G/U = \{U, gU, g^2U, g^3U, g^4U\} \cong \Z_5$.

Let $\rho: G \rightarrow G/U$ be the canonical projection given by $\rho(g) = gU$. In the group ring $\Z[G/U]$ we have
\begin{align*}
\rho(A) 
 &= \rho(g+g^2+g^3+g^5+g^6+g^9+g^{11}) \\
 &= gU+g^2U+g^3U+U+gU+g^4U+gU \\
 &= U+3gU+g^2U+g^3U+g^4U,
\end{align*}
and so
\begin{align}
\rho(A) \rho(A)^{(-1)} 
 &= (U+3gU+g^2U+g^3U+g^4U) (U+3g^4U+g^3U+g^2U+gU) 	\nonumber \\
 &= 13U + 9 (gU + g^2U + g^3U + g^4U)			\nonumber  \\
 &= 13\cdot 1_{G/U} + 9(G/U-1_{G/U})			\nonumber  \\
 &= 13 + 9(G/U -1) 					\label{eqn-139} \\
 &= 4 + 9 G/U. 						\nonumber
\end{align}
Although \eqref{eqn-139} has the same form as \eqref{eqn-diffset}, the projection $\rho(A)$ is not a difference set in $G/U$: it corresponds to a multiset of elements of $G/U$ but not a subset of~$G/U$, because its group ring coefficients do not all lie in~$\{0,1\}$.
\end{example}

We shall extend Examples~\ref{ex-projreldiffset} and~\ref{ex-projdiffset} to a general relative difference set and difference set in Examples~\ref{ex-projgenreldiffset} and~\ref{ex-projgendiffset}. 
We first state a result that illustrates an important advantage of working with group rings: the multiplicity of elements of a group $G$ is preserved under the projection mapping~$\rho$ to the quotient group~$G/U$.

\begin{result}\label{res-proj}
Let $U$ be a normal subgroup of a group $G$ and let $\rho: G \rightarrow G/U$ be the canonical projection.
Then
\[
\rho(G) = |U| (G/U) \quad \mbox{in the group ring $\Z[G/U]$}.
\]
\end{result}

Result~\ref{res-proj} might initially be surprising: the projection $\rho$ is a surjective mapping from $G$ to $G/U$, so from the viewpoint of mappings one might expect to obtain $\rho(G) = G/U$.
However, in the group ring $\Z[G/U]$ we regard
the expression $\rho(G)$ as the image under $\rho$ of the sum of elements of~$G$, and the expression $G/U$ as the sum of elements of~$G/U$.
Since $\rho$ maps all $|U|$ elements of a coset $gU$ in $G$ to the same element in $G/U$, we see that $\rho(G)$ contains each element of $G/U$ with multiplicity~$|U|$. 

\begin{example}[Projection of general relative difference set]\label{ex-projgenreldiffset}
Let $R$ be an $(m,n,k,\la)$ relative difference set in a group $G$ relative to a subgroup~$N$, let $U$ be a normal subgroup of $G$ and of~$N$, and let $\rho: G \rightarrow G/U$ be the canonical projection. 
Since $N$ is the forbidden subgroup, every two distinct elements $r_1, r_2$ of $R$ satisfy $r_1 r_2^{-1} \notin N$ and so $r_1 r_2^{-1} \notin U$.
Therefore $\rho(r_1) \ne \rho(r_2)$, so $\rho(R)$ is a subset (not a multi-subset) of~$G/U$.

Since $\rho$ is a homomorphism, we have
\[
\rho(RR^{(-1)})= \rho(R) \rho(R^{(-1)})= \rho(R)\rho(R)^{(-1)}
\]
(as can be verified by writing $R=\sum_{g \in G} a_g g \in \Z[G]$ and expanding).

Apply $\rho$ to both sides of \eqref{eqn-reldiffset} 
to obtain in the group ring $\Z[G/U]$ that
\begin{align*}
\rho(R) \rho(R)^{(-1)} 
 &= \rho(k \cdot 1_G) + \la (\rho(G) - \rho(N)) \\
 &= k \cdot 1_{G/U} +\la |U| (G/U-N/U)
\end{align*}
using Result~\ref{res-proj}.
We conclude from \eqref{eqn-reldiffset} that $\rho(R)$ is an $(m,\frac{n}{|U|},k,\lambda |U|)$ relative difference set in $G/U$ relative to~$N/U$.
\end{example}

\begin{example}[Projection of general difference set]\label{ex-projgendiffset}
Let $D$ be a $(v,k,\la)$ difference set in a group $G$, let $U$ be a normal subgroup of $G$, and let $\rho: G \rightarrow G/U$ be the canonical projection. 
Apply $\rho$ to both sides of \eqref{eqn-diffset} and use Result~\ref{res-proj}
to obtain in $\Z[G/U]$ that
\begin{align*}
\rho(D) \rho(D)^{(-1)} 
 &= \rho((k-\la)\cdot 1_G) + \la \rho(G) 	\\
 &= (k-\la) \cdot 1_{G/U} +\la |U| G/U 		\\
 &= (k-\la) +\la |U| G/U. 			
\end{align*}
\end{example}

\begin{remark}
In Example~\ref{ex-projdiffset}, the coefficients of the projection $\rho(A)$ in the group ring $\Z[G/U]$ are $1,3,1,1,1$ (whereas in Example~\ref{ex-projgenreldiffset} the corresponding coefficients all lie in $\{0,1\}$).
These are the numbers of elements of $A$ contained in each of the cosets of $U$ in~$G$, and are called the \emph{intersection numbers} of $A$ relative to~$U$.
The intersection numbers of a general difference set $D$ relative to a subgroup~$U$ must lie in $\{0,1,\dots, |U|\}$.
It is sometimes possible to determine congruence relations that must also be satisfied by the intersection numbers (for example, see \cite{ADJ95,DJ97}).

A common technique for studying the existence of a difference set $D$ in a group $G$ is to fix a subgroup $U$ and determine (or constrain) its possible projections $\rho(D)$ computationally or theoretically. If there are no such projections, then the assumed difference set cannot exist. Otherwise, one ``lifts'' each possible projection $\rho(D)$ from $G/U$ to $G$, testing whether at least one of the resulting pre-images $D$ in $G$ is a difference set.
\end{remark}

We now place the group ring expression $A^{(-1)}$ in a more general setting.
Let $A = \sum_{g \in G} a_g g$ be an element of the group ring $\Z[G]$,
and let $t$ be an integer. Then
$A^{(t)}$ is the element $\sum_{g \in G} a_g g^t$ of $\Z[G]$.
(The expression $A^{(t)}$ is not to be confused with the product $A^t$ of $t$ copies of~$A$ in~$\Z[G]$.)
In the case that $G$ is abelian, we may regard the expression $A^{(t)}$ as extending the group homomorphism $\sig_t : g \mapsto g^t$ in $\Hom(G)$ to the group ring~$\Z[G]$, and so write $\sigma_t(A) = A^{(t)}$.
In particular, the expression $A^{(-1)}$ may then be regarded as extending the group automorphism $\sigma_{-1} : g \mapsto g^{-1}$ in $\Aut(G)$ to $\Z[G]$.

\begin{example}[Numerical multiplier of difference set]\label{ex-nummult} 
Consider the $(15,7,3)$ difference set
$A=\{ g, g^2, g^3, g^5, g^6, g^9, g^{11} \}$ 
in $G=\Z_{15}=\pointy{ g }$
given in Example~\ref{ex-diffset}.
Let $\sig_2: \Z_{15} \rightarrow \Z_{15}$ be the group automorphism given by $\sig_2(g)=g^2$. Then
\[
\{ \sig_2(x) \mid x \in A \} =
\{ g^2, g^4, g^6, g^{10}, g^{12}, g^3, g^7 \} =
\{ g x \mid x \in A\},
\]
so $\sig_2$ maps $A$ to a translate of~$A$. The integer $2$ is called a numerical multiplier of the difference set~$A$. In group ring notation, 
this is equivalent to $A^{(2)}=gA$.

In general, let $D$ be a difference set in an abelian group~$G$ and let $t$ be an integer coprime to~$|G|$. Then $t$ induces a group automorphism $\sig_t: x \mapsto x^t$ in $\Aut(G)$, so that $\sig_t(D)=D^{(t)}$, and $t$ is a \emph{numerical multiplier} of $D$ if
\[
D^{(t)}=hD \mbox{\quad for some $h \in G$.}
\]
\end{example}

\begin{remark}
A collection of results known as the Multiplier Theorems gives sufficient conditions for an assumed difference set in an abelian group $G$ to admit a numerical multiplier~$t$. It is striking that these conditions depend only on the parameters $(v,k,\la)$, and not on the form of~$G$.
The Multiplier Theorems have been widely used in constructive and nonexistence results for difference sets~\cite{GS}.
\end{remark}

\section{Character sums}\label{sec-character}
In this section, we introduce character theory and show how it can be fruitfully applied in conjunction with the group ring formulation of Section~\ref{sec-groupring}.
We consider only finite abelian groups.

The \emph{exponent} of a group $G$, written $\exp(G)$, is the smallest positive integer~$n$ such that $g^n = 1_G$ for each $g \in G$.  
A \emph{character} $\chi$ of a group $G$ is a group homomorphism from $G$ to the multiplicative group of the complex field~$\C$.  

Let the group $G$ have exponent~$n$ and let $\chi$ be a character of $G$. 
For each $g \in G$ we have
\[
\chi(g)^n = \chi(g^n) = \chi(1_G) = 1,
\]
where the first and third equalities hold because $\chi$ is a group homomorphism, and the second holds because $\exp(G) = n$.
Therefore each $\chi(g)$ is an $n$-th root of unity.
Writing $\zn = e^{2\pi i/n}$, we see that 
the range of $\chi$ is the multiplicative group~$\pointy{\zn}$.

There are two main approaches for describing the set of all characters of an elementary abelian group, one using the dot product and the other using the trace function. We shall illustrate these two approaches by reference to the group $(\Z_2^4, +)$ in Examples~\ref{ex-EAdot} and~\ref{ex-EAtrace}.

\begin{example}[Dot product]\label{ex-EAdot}
Let $G=(\Z_2^4,+)$. For each $a=(a_1,a_2,a_3,a_4) \in G$, define the function
$\chi_a: G \rightarrow \{-1, 1\}$ by
\[
\chi_a(x)=(-1)^{a_1x_1+a_2x_2+a_3x_3+a_4x_4}=(-1)^{a \cdot x} \quad \mbox{for each $x=(x_1,x_2,x_3,x_4) \in G$}, 
\]
where $a \cdot x$ is the usual dot product.
It is straightforward to check that 
$\chi_a(x+y) = \chi_a(x) \chi_a(y)$ for all $a,x,y \in G$, and so the function
$\chi_a$ is a character of $G$ for each $a \in G$. 

We next show that the set $\wh{G}=\{ \chi_a \mid a \in G \}$ comprises all the characters of~$G$. 
Since $\exp(G) = 2$, a character $\chi$ of $G$ is a group homomorphism from $G$ to $\pointy{\zeta_2} = \{-1,1\}$. The character $\chi$ is completely determined by the sequence of character values
\[
\big(\chi((0,0,0,1)), \, \chi((0,0,1,0)), \, \chi((0,1,0,0)), \, \chi((1,0,0,0))\big)
\]
belonging to~$\{-1, 1\}^4$.
Since each element of $\{-1,1\}^4$ corresponds to a different character, there are exactly $2^4$ characters~$\chi$ of~$G$. Since 
$|\wh{G}| = 2^4$ (because $\chi_a \ne \chi_b$ for $a \ne b$), the set $\wh{G}$ comprises all the characters of~$G$. 

The identity of $G$ is $1_G=(0,0,0,0)$, and the character 
$\chi_{1_G}$ maps each element of $G$ to~$1$. This character is called the principal character of $G$, and all other characters are called nonprincipal.
Define an operation $\circ$ on $\wh{G}$ by 
\begin{equation}\label{eqn-chiachib}
\chi_a \circ \chi_b = \chi_{a+b} \quad \mbox{for each $a,b \in G$}, 
\end{equation}
where the addition $a+b$ takes place in~$G$.
Then it is easily verified that $(\wh{G}, \circ)$ is a group with identity $\chi_{1_G}$ because $(G, +)$ is a group with identity~$1_G$. 
Since a character is a homomorphism, it follows from \eqref{eqn-chiachib} that
\[
(\varphi \circ \theta)(g)=\varphi(g)\theta(g) \quad \mbox{for each $g \in G$ and each $\varphi, \theta \in \wh{G}$}. 
\]

We now show that $G \cong \wh{G}$.
Define a mapping $\psi: G \rightarrow \wh{G}$ by $\psi(a)=\chi_a$ for each $a \in G$. Since $\psi$ is injective and surjective, and 
$\psi(a+b) = \psi(a) \circ \psi(b)$ for all $a,b \in G$ by \eqref{eqn-chiachib},
the mapping $\psi$ is a group isomorphism. 
\end{example}

\begin{example}[Trace function]\label{ex-EAtrace}
Let $G = (\Z_2^4, +)$, as in Example~\ref{ex-EAdot}.
We shall now regard $G$ as the additive group of the finite field $\F_{16}$ and 
use the trace function $\Tr: \F_{16} \rightarrow \F_2$ given by
\[
\Tr(x)=x+x^2+x^4+x^8 \quad \mbox{for each $x \in \F_{16}$}
\]
to describe all the characters of $G$.

For each $a \in \F_{16}$, define the function
$\chi_a: \F_{16} \rightarrow \{-1, 1\}$ by
\[
\chi_a(x)=(-1)^{\Tr(ax)} \quad \mbox{for each $x \in \F_{16}$}. 
\]
It is straightforward to check that 
$\chi_a(x+y) = \chi_a(x) \chi_a(y)$ for all $a,x,y \in \F_{16}$.
Regarding $G$ as the additive group of $\F_{16}$, 
we may consider each function $\chi_a$ to be a character of $G$ and the index $a$ to be an element of~$G$.

Using identical arguments to those in Example~\ref{ex-EAdot},
the set $\wh{G}=\{ \chi_a \mid a \in G \}$ comprises all the characters of~$G$,
 and $(\wh{G}, \circ)$ (where $\circ$ is defined as in \eqref{eqn-chiachib}) is a group whose identity is the principal character~$\chi_{1_G}$, and the groups $G$ and $\wh{G}$ are isomorphic.
\end{example}

The trace function approach of Example~\ref{ex-EAtrace} for characters of an abelian group applies only when the group is elementary abelian. However, the dot product approach of Example~\ref{ex-EAdot} can be extended to all abelian groups, as we now show.

\begin{definition}\label{def-charcalc}
Let $G=\Z_{n_1}\times\Z_{n_2}\times\cdots\times\Z_{n_t}$ and let 
$n=\exp(G) = \lcm(n_1, n_2, \dots, n_t)$. 
For each $a=(a_1,a_2,\ldots,a_t) \in G$, 
let $\chi_a : G \rightarrow \pointy{\zn}$ be the function given by 
\[
\chi_a(x) = (\zeta_{n_1}^{a_1})^{x_1} (\zeta_{n_2}^{a_2})^{x_2} \dots (\zeta_{n_t}^{a_t})^{x_t} \quad \mbox{for each $x=(x_1,x_2,\ldots,x_t) \in G$}. 
\]
\end{definition}
It is straightforward to check that 
$\chi_a(x+y) = \chi_a(x) \chi_a(y)$ for all $a,x,y \in G$, and so the function
$\chi_a$ is a character of $G$ for each $a \in G$. 
The \emph{principal character} $\chi_{1_G}$ maps each element of $G$ to $1$, and all other characters are \emph{nonprincipal}.
Define the operation $\circ$ as in \eqref{eqn-chiachib}.
Using trivial modifications to the arguments in Example~\ref{ex-EAdot},
the set $\wh{G}=\{ \chi_a \mid a \in G \}$ comprises all $\prod_i n_i$  characters of~$G$,
 and the \emph{character group} $(\wh{G}, \circ)$ of $G$ is a group whose identity is~$\chi_{1_G}$, and the groups $G$ and $\wh{G}$ are isomorphic. 

\begin{remark}\label{rem-identity}
Let $G$ be a group and let $g \in G$. Then 
\[
\mbox{$\chi(g) = 1$ for all $\chi \in \wh{G}$} \implies g = 1_G,
\]
because the contrapositive follows from Definition~\ref{def-charcalc}.
\end{remark}

\begin{example}[Character calculation]
Let $G = \Z_4 \times \Z_8$. Then we calculate
\[
\chi_{(3,1)}(2,7) = 
(\zeta_4^3)^2 (\zeta_8^1)^7 = \zeta_8^3.
\]
In multiplicative notation we instead write $G = \Z_4 \times \Z_8 = \pointy{x,y}$. 
The character $\chi_{(3,1)}$ is now written as $\chi_{x^3y}$, which acts on the generators of $G$ according to
\[
\chi_{x^3y}(x) = \zeta_4^3 \quad \mbox{and} \quad \chi_{x^3y}(y) = \zeta_8^1,
\]
and so 
$\chi_{x^3y}(x^2 y^7) = (\zeta_4^3)^2 (\zeta_8^1)^7 = \zeta_8^3$.
\end{example}

We now illustrate how character theory is a crucial tool in the study of difference sets and their variants.
Let $G$ be a group having exponent~$n$,
let $\chi \in \wh{G}$, and let $A=\sum_{g \in G} a_g g \in \Z[G]$. 
The \emph{character sum} of $\chi$ on $A$ is 
\[
\chi(A)=\sum_{g \in G} a_g \chi(g),
\]
which is a sum of $n$-th roots of unity.
Let $N$ be a subgroup of~$G$.
A character $\chi \in \wh{G}$ is \emph{principal on $N$} if $\chi(g) = 1$ for all $g \in N$
(and so a character $\chi \in \wh{G}$ is principal on $G$ if and only if $\chi = \chi_{1_G}$).
We write
\[
N^\perp = \{ \chi \in \wh{G} \mid \mbox{$\chi$ is principal on $N$}\}.
\]
It is straightforward to verify that $N^\perp$ is a subgroup of~$(\wh{G}, \circ)$.
If $N_1$ is a subgroup of $N_2$ in a group $G$, then $N_2^\perp$ is a subgroup of~$N_1^\perp$ in~$\wh{G}$.
The subgroup $G^\perp$ contains only the principal character~$1_G$.

In Examples~\ref{ex-chardiffset} to~\ref{ex-charpartialdiffset}, we demonstrate that the character sums of the objects studied in Examples~\ref{ex-diffset} to~\ref{ex-partialdiffset} each have strikingly regular properties; after developing the necessary theory, we shall explain in Theorems~\ref{thm-diffset} to~\ref{thm-partialdiffset} how these properties arise.

\begin{example}[Character viewpoint for difference set]\label{ex-chardiffset}
Consider the $(15,7,3)$ difference set 
$A= g+ g^2+ g^3+ g^5+ g^6+ g^9+ g^{11}$ in 
$G = \Z_{15}=\pointy{g}$ given in Example~\ref{ex-diffset}.
Then the character sum of the character $\chi$ on $A$ is
\[
\chi(A) = \chi(g)+ \chi(g^2)+ \chi(g^3)+ \chi(g^5)+ \chi(g^6)+ \chi(g^9)+ \chi(g^{11}). 
\]
The following table shows that as $\chi$ ranges over the nonprincipal characters 
$\chi_g, \chi_{g^2}, \dots, \chi_{g^{14}}$ of $G$, the value of $\chi(A)$ varies:
\begin{center}
\begin{tabular}{ |c|c| } 
 \hline
 character $\chi$ & character sum $\chi(A)$ \\ \hline
 $\chi_g$ & $\z+\z^2+\z^3+\z^5+\z^6+\z^9+\z^{11}$  \\  
 $\chi_{g^2}$ & $\z^2+\z^3+\z^4+\z^6+\z^7+\z^{10}+\z^{12}$  \\
 $\chi_{g^3}$ & $2\z^3$  \\  
 $\chi_{g^4}$ & $\z^4+\z^5+\z^6+\z^8+\z^9+\z^{12}+\z^{14}$ \\  
 $\chi_{g^5}$ & $-2\z^5$  \\  
 $\chi_{g^6}$ & $2\z^6$  \\  
 $\chi_{g^7}$ & $\z^2+\z^3+\z^5+\z^6+\z^7+\z^{12}+\z^{14}$  \\  
 $\chi_{g^8}$ & $\z+\z^3+\z^8+\z^9+\z^{10}+\z^{12}+\z^{13}$  \\ 
 $\chi_{g^9}$ & $2\z^9$ \\  
 $\chi_{g^{10}}$ & $-2\z^{10}$  \\  
 $\chi_{g^{11}}$ & $\z+\z^3+\z^6+\z^7+\z^{9}+\z^{10}+\z^{11}$  \\  
 $\chi_{g^{12}}$ & $2\z^{12}$  \\  
 $\chi_{g^{13}}$ & $\z^3+\z^5+\z^8+\z^9+\z^{11}+\z^{12}+\z^{13}$  \\  
 $\chi_{g^{14}}$ & $\z^4+\z^6+\z^9+\z^{10}+\z^{12}+\z^{13}+\z^{14}$ \\ \hline
\end{tabular}
\end{center}
However, direct calculation shows that $|\chi(A)|^2$ is invariant:
\[
|\chi(A)|^2 = 4 \quad \mbox{for all nonprincipal characters $\chi$ of $G$}.
\]
\end{example}

\begin{example}[Character viewpoint for relative difference set]\label{ex-charreldiffset}
Consider the $(4,4,4,1)$ relative difference set
$B = 1+a+b+a^3b^3$ in
$G=\Z_4 \times \Z_4=\pointy{a, b}$ relative to the subgroup 
$N=\pointy{ a^2, b^2 } \cong \Z_2 \times \Z_2$ given in Example~\ref{ex-reldiffset}.
As $\chi$ ranges over the nonprincipal characters 
of $G$, the value of $|\chi(B)|^2$ depends only on whether $\chi$ is principal on~$N$: 
\[
|\chi(B)|^2 = \begin{cases}
	0 & \mbox{for all $\chi \in N^\perp \setminus G^\perp$,} \\
	4 & \mbox{for all $\chi \notin N^\perp$}.
\end{cases}
\]
\end{example}

\begin{example}[Character viewpoint for partial difference set]\label{ex-charpartialdiffset}
Consider the $(9,4,1,2)$ partial difference set 
$C = x+x^2+xy+x^2y^2 $ in 
$G=\Z_3 \times \Z_3=\pointy{ x, y }$ given in Example~\ref{ex-partialdiffset}. 
As $\chi$ ranges over the nonprincipal characters 
of $G$, the value $\chi(C)$ is real and takes only two possible values: 
\[
\chi(C) \in \{1, -2\} \quad \mbox{for all nonprincipal characters $\chi$ of $G$}.
\]
\end{example}

We now introduce the results required to explain the properties illustrated in Examples~\ref{ex-chardiffset} to~\ref{ex-charpartialdiffset}.

\begin{lemma}\label{lemma-homlike}
Let $G$ be a group,
let $A, B \in \Z[G]$, let $\exp(G) = n$, and let $t$ be an integer.
Define the function
$\wti{\sig_t}: \Z[\zn] \rightarrow \Z[\zn]$ by
$\wti{\sig_t}\Big(\sum_{i} a_i \zn^i\Big) = \sum_i a_i \zn^{ti}$.
Then
\begin{enumerate}[$(i)$]
\item
$\chi(AB) = \chi(A) \chi(B)$

\item
\vspace{0.3em}
$\chi(A^{(-1)}) = \overline{\chi(A)}$

\item
\vspace{0.3em}
$\chi(A^{(t)})=\wti{\sig_t}\big(\chi(A)\big)$.

\end{enumerate}
\end{lemma}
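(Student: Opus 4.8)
The plan is to establish all three identities by unwinding the definition $\chi(A) = \sum_{g \in G} a_g \chi(g)$ and invoking, in each case, one elementary property of $\chi$ on group elements. The common engine is that $\chi$ is a group homomorphism into the abelian group $\C^\times$, so $\chi(gh) = \chi(g)\chi(h)$ for all $g,h \in G$; no assumption that $G$ is abelian is needed for parts $(i)$ and $(ii)$.

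For $(i)$ I would take the product formula $AB = \sum_{h \in G}\sum_{k \in G} a_k b_h\, kh$ recorded earlier, apply $\chi$ termwise, replace $\chi(kh)$ by $\chi(k)\chi(h)$, and observe that the double sum factors as $\big(\sum_k a_k \chi(k)\big)\big(\sum_h b_h \chi(h)\big) = \chi(A)\chi(B)$; in effect this says $\chi$ extends to a ring homomorphism $\Z[G] \to \C$. For $(ii)$ I would write $\chi(A^{(-1)}) = \sum_g a_g \chi(g)^{-1}$ using the homomorphism property, and then use the fact, established just before the lemma, that each $\chi(g)$ is an $n$-th root of unity and hence has modulus $1$, so $\chi(g)^{-1} = \overline{\chi(g)}$; since the coefficients $a_g$ are real integers, pulling the conjugation outside the sum gives $\overline{\chi(A)}$.

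For $(iii)$ the computation itself is immediate: $\chi(A^{(t)}) = \sum_g a_g \chi(g^t) = \sum_g a_g \chi(g)^t$, and writing each character value as $\chi(g) = \zn^{e_g}$ displays $\chi(A) = \sum_g a_g \zn^{e_g}$ as a $\Z$-combination of powers of $\zn$, to which the defining recipe for $\wti{\sig_t}$ sends $\sum_g a_g \zn^{t e_g} = \sum_g a_g \chi(g)^t = \chi(A^{(t)})$. The one point I would be careful about---and what I regard as the main obstacle---is well-definedness: a given element of $\Z[\zn]$ has many representations as $\sum_i a_i \zn^i$, so $\wti{\sig_t}$ is genuinely well defined on all of $\Z[\zn]$ only when $\gcd(t,n) = 1$, where it coincides with the Galois automorphism $\sig_t$ of $\Q(\zn)/\Q$. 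For instance, with $n = 4$ and $t = 2$ the two expressions $0$ and $\zn^0 + \zn^2$ denote the same element of $\Z[\zn]$ yet receive the distinct images $0$ and $2$. I would therefore state explicitly that in $(iii)$ the map $\wti{\sig_t}$ is applied to the canonical expression for $\chi(A)$ furnished by the character values $\chi(g) = \zn^{e_g}$ (whose exponents $e_g$ are determined modulo $n$); since $\wti{\sig_t}$ acts on each such power $\zn^{e_g}$ by $\zn^{e_g}\mapsto \zn^{t e_g} = \chi(g)^t$, the regrouping of terms is irrelevant and the identity is unambiguous for every integer $t$.
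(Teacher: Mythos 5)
Your proof is correct and follows essentially the same route as the paper: $(i)$ from the fact that $\chi$ extends to a ring homomorphism $\Z[G]\to\C$, $(ii)$ from $\chi(g^{-1})=\chi(g)^{-1}=\overline{\chi(g)}$ because each $\chi(g)$ is a root of unity, and $(iii)$ by the same termwise computation (the paper merely says it is ``similar to $(ii)$''). Your well-definedness remark on $(iii)$ is a genuine point the paper passes over: as literally defined, $\wti{\sig_t}$ is a well-defined map on $\Z[\zn]$ only when $\gcd(t,n)=1$ (your example $0=\zeta_4^0+\zeta_4^2\mapsto 2$ for $n=4$, $t=2$ is correct), and your convention of applying it to the particular representation $\sum_g a_g\zn^{e_g}$ furnished by the character values makes the identity unambiguous for every integer $t$, while agreeing with the Galois automorphism $\sig_t$ of $\Q(\zn)/\Q$ in the coprime case actually used later in the paper.
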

\begin{proof}
Since $\chi$ is a homomorphism, $(i)$ holds.

For $(ii)$, let $A = \sum_{g \in G} a_g g$. Then
\[
\chi(A^{(-1)}) 
 = \chi \Big(\sum_{g \in G} a_g g^{-1} \Big)
 = \sum_{g \in G} a_g \chi(g^{-1}).
\]
Now for all $g \in G$, we have
$\chi(g) \chi(g^{-1}) = \chi(gg^{-1}) = \chi(1_G) = 1 = \chi(g) \overline{\chi(g)}$ because $\chi(g)$ is a root of unity, and by  
cancelling $\chi(g)$ we obtain 
$\chi(g^{-1}) = \overline{\chi(g)}$.
Therefore 
\[
\chi(A^{(-1)}) 
 = \sum_{g \in G} a_g \overline{\chi(g)} = \overline{\chi \Big(\sum_{g \in G} a_g g \Big)} = \overline{\chi(A)}.
\]

The proof of $(iii)$ is similar to that of $(ii)$ (and reduces to $(ii)$ in the special case $t=-1$).
\end{proof}

\begin{result}[Induced character {\cite[Theorem~17.3]{JL01}}]
\label{res-inducedchar}
Let $N$ be a subgroup of a group~$G$.
Each character $\chi \in N^\perp$ induces a character $\wti{\chi}$ in $\wh{G/N}$ for which
\[
\wti{\chi}(gN) = \chi(g) \quad \mbox{for all $g \in G$},
\]
and the mapping $\chi \mapsto \wti{\chi}$ is a bijection from 
$N^\perp$ to~$\wh{G/N}$.
\end{result}

Recall that we defined $N^\perp$ from $N$ as
\[
N^\perp = \{ \chi \in \wh{G} \mid \chi(g) = 1 \mbox{ for all } g \in N\}.
\]
We now show that we can obtain $N$ from $N^\perp$ in a dual manner.

\begin{proposition}[Duality]\label{prop-duality}
Let $N$ be a subgroup of a group~$G$. Then
\[
N = \{ g \in G \mid \chi(g) = 1 \mbox{ for all } \chi \in N^\perp \}.
\]
\end{proposition}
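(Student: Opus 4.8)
The plan is to prove the set equality by establishing the two inclusions separately. Write $M = \{ g \in G \mid \chi(g) = 1 \mbox{ for all } \chi \in N^\perp \}$ for the set on the right-hand side, so that the goal becomes $N = M$.

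The inclusion $N \subseteq M$ is immediate from the definition of $N^\perp$. If $g \in N$, then by the very definition of $N^\perp$ as the set of characters that are principal on $N$, every $\chi \in N^\perp$ satisfies $\chi(g) = 1$; hence $g \in M$.

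For the reverse inclusion $M \subseteq N$, which carries all the content, I would argue by contraposition: assuming $g \notin N$, I produce a character $\chi \in N^\perp$ with $\chi(g) \neq 1$, which shows $g \notin M$. Since $g \notin N$, the coset $gN$ is not the identity element of the quotient group $G/N$. Applying Remark~\ref{rem-identity} to the group $G/N$ then yields a character $\psi \in \wh{G/N}$ with $\psi(gN) \neq 1$. The crucial step is to pull $\psi$ back to a character on $G$ that lies in $N^\perp$: by the Induced Character result (Result~\ref{res-inducedchar}), the map $\chi \mapsto \wti{\chi}$ is a \emph{bijection} from $N^\perp$ onto $\wh{G/N}$, so there is some $\chi \in N^\perp$ with $\wti{\chi} = \psi$. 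For this $\chi$ we have $\chi(g) = \wti{\chi}(gN) = \psi(gN) \neq 1$, as required.

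I expect the main obstacle to be the use of the surjectivity half of the bijection in Result~\ref{res-inducedchar}: it is precisely the statement that \emph{every} character of $G/N$ arises from a character in $N^\perp$ that lets the separating character $\psi$ on the quotient be lifted back to a character on $G$ that is principal on $N$. Once that tool is granted, the remaining steps are routine bookkeeping.
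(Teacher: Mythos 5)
Your proof is correct and follows essentially the same route as the paper's: both inclusions rest on the definition of $N^\perp$ for the easy direction, and on Remark~\ref{rem-identity} applied to the quotient group $G/N$ together with the surjectivity of the induced-character bijection in Result~\ref{res-inducedchar} for the hard direction. The only cosmetic difference is that you argue the reverse inclusion by contraposition (producing a separating character for $g \notin N$), whereas the paper argues directly that $\phi(gN)=1$ for every $\phi \in \wh{G/N}$ and then invokes the remark.
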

\begin{proof}
The set $N$ is contained in the set on the right side, by definition of $N^\perp$. 

To show the reverse containment, suppose that $g \in G$ satisfies $\chi(g) = 1$ for all $\chi \in N^\perp$. 
We claim that $\phi(gN) = 1$ for all $\phi \in \wh{G/N}$, so that by Remark~\ref{rem-identity} we have $gN = 1_{G/N} = N$ and therefore $g \in N$, as required.
To prove the claim, let $\phi \in \wh{G/N}$. 
Then by Result~\ref{res-inducedchar} we have $\phi = \wti{\chi}$ for some $\chi \in N^\perp$ and $\phi(gN) = \wti{\chi}(gN) = \chi(g) = 1$.
\end{proof}

\begin{proposition}[Orthogonality relations for characters] \label{prop-orthogonality}
Let $N$ be a subgroup of a group $G$.
\begin{enumerate}[$(i)$]
\item
For all $\chi \in \wh{G}$,
\[
\chi(N) = \begin{cases}
|N| & \mbox{if $\chi \in N^\perp$}, \\
0   & \mbox{if $\chi \notin N^\perp$}.
\end{cases}
\]

\item
For all $g \in G$,
\[
\sum_{\chi \in N^\perp} \chi(g) = \begin{cases}
\frac{|G|}{|N|} & \mbox{if $g \in N$,} \\
0       & \mbox{if $g \notin N$}.
\end{cases}
\]
\end{enumerate}
\end{proposition}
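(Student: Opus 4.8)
The plan is to prove both parts by the same elementary device: a sum over a group is invariant under translation by any element of that group, so multiplying the sum by a suitably chosen nontrivial character value forces it to vanish. Part $(i)$ sums the character $\chi$ over the group $N$, whereas part $(ii)$ sums the ``evaluation at $g$'' over the character subgroup $N^\perp$, and the two arguments are formally dual to each other.

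For part $(i)$ I would first dispose of the case $\chi \in N^\perp$: here $\chi(h) = 1$ for every $h \in N$ by definition of $N^\perp$, so $\chi(N) = \sum_{h \in N} \chi(h) = |N|$. For the case $\chi \notin N^\perp$ I would fix some $h_0 \in N$ with $\chi(h_0) \ne 1$. Since left multiplication by $h_0$ permutes $N$, reindexing via $h \mapsto h_0 h$ and using that $\chi$ is a homomorphism gives $\chi(h_0)\chi(N) = \sum_{h \in N} \chi(h_0 h) = \chi(N)$. Hence $(\chi(h_0) - 1)\chi(N) = 0$, and since $\chi(h_0) \ne 1$ I conclude $\chi(N) = 0$.

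For part $(ii)$ I would first record that $|N^\perp| = |G|/|N|$: by Result~\ref{res-inducedchar} the map $\chi \mapsto \wti{\chi}$ is a bijection from $N^\perp$ onto $\wh{G/N}$, and $\wh{G/N}$ has the same order as $G/N$ because a finite abelian group is isomorphic to its character group. In the case $g \in N$, every $\chi \in N^\perp$ satisfies $\chi(g) = 1$, so the sum equals $|N^\perp| = |G|/|N|$. In the case $g \notin N$ I would invoke the Duality result (Proposition~\ref{prop-duality}): since $g \notin N$, it is impossible that $\chi(g) = 1$ for all $\chi \in N^\perp$, so there exists $\chi_0 \in N^\perp$ with $\chi_0(g) \ne 1$. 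Because $N^\perp$ is a group under $\circ$, the map $\chi \mapsto \chi_0 \circ \chi$ permutes $N^\perp$, and using $(\chi_0 \circ \chi)(g) = \chi_0(g)\chi(g)$ I obtain $\chi_0(g) \sum_{\chi \in N^\perp} \chi(g) = \sum_{\chi \in N^\perp} \chi(g)$, which again forces the sum to vanish.

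The main obstacle is not the translation trick itself, which is a one-line reindexing in each part, but rather ensuring the two supporting facts for part $(ii)$ are securely in place: the count $|N^\perp| = |G|/|N|$, which rests on Result~\ref{res-inducedchar} together with the self-duality of finite abelian groups, and the existence of a separating character $\chi_0 \in N^\perp$ with $\chi_0(g) \ne 1$ whenever $g \notin N$, which is exactly the content of Proposition~\ref{prop-duality}. Once these ingredients are cited, both vanishing statements reduce to the identical argument used in $(i)$, making the parallel between the two parts transparent.
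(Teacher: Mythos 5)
Your proposal is correct and follows essentially the same route as the paper's proof: the translation trick for both vanishing cases, Result~\ref{res-inducedchar} together with $\wh{G/N}\cong G/N$ for the count $|N^\perp|=|G|/|N|$, and Proposition~\ref{prop-duality} to produce the separating character when $g\notin N$. No substantive differences.
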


\begin{proof}
\mbox{}
\begin{enumerate}[$(i)$]
\item
In the case that $\chi \in N^\perp$, we have $\chi(g) = 1$ for all $g \in N$ and so $\chi(N) = |N|$.

In the case that $\chi \notin N^\perp$, there is an element $g \in N$ for which $\chi(g) \ne 1$. 
Since $N = gN$, we have $\chi(N) = \chi(gN) = \chi(g) \chi(N)$. Therefore
$\big(1-\chi(g)\big) \chi(N) = 0$, and so $\chi(N)= 0$ because $\chi(g) \ne 1$.

\item
In the case that $g \in N$, we have $\chi(g) = 1$ for all $\chi \in N^\perp$ and so $\sum_{\chi \in N^\perp} \chi(g) = |N^\perp| = |\wh{G/N}|$
using Result~\ref{res-inducedchar}, which equals
$\frac{|G|}{|N|}$ because $\wh{G/N} \cong G/N$. 

In the case that $g \notin N$, 
by Proposition~\ref{prop-duality} we have $\phi(g) \ne 1$ for some $\phi \in N^\perp$.
Therefore
\[
\phi(g) \sum_{\chi \in N^\perp} \chi(g) = 
\sum_{\chi \in N^\perp} \phi(g) \chi(g) = 
\sum_{\chi \in N^\perp} (\phi \circ \chi) (g) =
\sum_{\chi \in N^\perp} \chi (g),
\]
where the last equality holds
because $N^\perp$ is a subgroup of $\wh{G}$ and $\phi \in N^\perp$. 
Therefore $\big(\phi(g) -1\big) \sum_{\chi \in N^\perp} \chi(g) = 0$, so 
$\sum_{\chi \in N^\perp} \chi(g) = 0$ because \mbox{$\phi(g) \ne 1$}.
\end{enumerate}
\end{proof}

We now use the orthogonality relations for characters to establish the following inversion formula.

\begin{proposition}[Fourier inversion formula] \label{prop-fourier}
Let $G$ be a group and let $A= \sum_{g \in G} a_g g\in \Z[G]$. Then
\[
a_g=\frac{1}{|G|}\sum_{\chi \in \wh{G}} \chi(A)\chi(g^{-1}) \quad \mbox{for each $g \in G$}.
\]
\end{proposition}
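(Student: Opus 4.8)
The plan is to start from the right-hand side and reduce it to $a_g$ by swapping the order of summation and invoking the orthogonality relations of Proposition~\ref{prop-orthogonality}. First I would substitute the definition of the character sum $\chi(A) = \sum_{h \in G} a_h \chi(h)$ into the expression $\frac{1}{|G|} \sum_{\chi \in \wh{G}} \chi(A) \chi(g^{-1})$, obtaining a double sum over $\chi \in \wh{G}$ and $h \in G$. Since both $G$ and $\wh{G}$ are finite, interchanging the two sums is immediate, and I would then pull the coefficient $a_h$ outside the inner sum over $\chi$ to obtain
\[
\frac{1}{|G|} \sum_{h \in G} a_h \sum_{\chi \in \wh{G}} \chi(h) \chi(g^{-1}).
\]

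Because each character is a homomorphism, $\chi(h)\chi(g^{-1}) = \chi(hg^{-1})$, so the inner sum becomes $\sum_{\chi \in \wh{G}} \chi(hg^{-1})$. The key step is to apply Proposition~\ref{prop-orthogonality}$(ii)$ with $N$ taken to be the trivial subgroup $\{1_G\}$. In that case every character is principal on $N$, so $N^\perp = \wh{G}$, and $\frac{|G|}{|N|} = |G|$. Proposition~\ref{prop-orthogonality}$(ii)$ then yields that $\sum_{\chi \in \wh{G}} \chi(hg^{-1})$ equals $|G|$ when $hg^{-1} = 1_G$ (that is, $h = g$) and $0$ otherwise.

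Consequently all terms of the outer sum vanish except the single term $h = g$, which contributes $\frac{1}{|G|} \cdot a_g \cdot |G| = a_g$, completing the proof. There is no substantial obstacle here: the only point requiring care is recognizing that the orthogonality relation should be specialized to the trivial subgroup, together with the accompanying observations that $N^\perp = \wh{G}$ and $\frac{|G|}{|N|} = |G|$ in that case. The interchange of finite sums and the use of the homomorphism property are routine.
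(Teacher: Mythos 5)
Your proposal is correct and follows essentially the same route as the paper's proof: expand $\chi(A)$, interchange the finite sums, combine $\chi(h)\chi(g^{-1})=\chi(hg^{-1})$, and apply Proposition~\ref{prop-orthogonality}$(ii)$ with $N=\{1_G\}$ to isolate the term $h=g$. No gaps.
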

\begin{proof}
For each $g \in G$, we have 
\begin{align*}
  \sum_{\chi \in \wh{G}} \chi(A) \chi(g^{-1})
&=\sum_{\chi \in \wh{G}} \chi\Big(\sum_{h \in G} a_h h \Big) \chi(g^{-1})
 =\sum_{\chi \in \wh{G}} \sum_{h \in G}a_h\chi(hg^{-1}) \\
&=\sum_{h \in G}a_h \sum_{\chi \in \wh{G}} \chi(hg^{-1})
 =a_g |G|
\end{align*}
by Proposition~\ref{prop-orthogonality}$(ii)$ with $N = \{1_G\}$.
\end{proof}

Proposition~\ref{prop-fourier} shows that a group ring element $A \in \Z[G]$ is completely determined by the values of $\chi(A)$ as $\chi$ ranges over~$\wh{G}$.

\begin{corollary}\label{cor-fourier}
Let $A, B \in \Z[G]$. Then
\begin{enumerate}[$(i)$]
\item
$A = B$ if and only if $\chi(A)=\chi(B)$ for all characters $\chi$ of $G$.
\vspace{0.5em}

\item
$A^{(-1)} = A$ if and only if $\chi(A)$ is real for all nonprincipal characters $\chi$ of~$G$.
\end{enumerate}
\end{corollary}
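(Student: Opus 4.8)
The plan is to derive both parts directly from the Fourier inversion formula (Proposition~\ref{prop-fourier}), which guarantees that a group ring element is completely determined by its character sums.

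For part $(i)$, the forward implication is immediate: if $A = B$ then $\chi(A) = \chi(B)$ for every $\chi \in \wh{G}$, since the character sum is a well-defined function of the group ring element. For the reverse implication, I would write $A = \sum_{g \in G} a_g g$ and $B = \sum_{g \in G} b_g g$ and apply Proposition~\ref{prop-fourier} to each coefficient in turn, obtaining
\[
a_g = \frac{1}{|G|}\sum_{\chi \in \wh{G}} \chi(A)\chi(g^{-1}) = \frac{1}{|G|}\sum_{\chi \in \wh{G}} \chi(B)\chi(g^{-1}) = b_g \quad \mbox{for each $g \in G$,}
\]
where the middle equality uses the hypothesis $\chi(A) = \chi(B)$. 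Since $a_g = b_g$ for all $g$, we conclude $A = B$.

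For part $(ii)$, I would combine part $(i)$ with Lemma~\ref{lemma-homlike}$(ii)$, which states that $\chi(A^{(-1)}) = \overline{\chi(A)}$. By part $(i)$ applied to the pair $A^{(-1)}$ and $A$, the equality $A^{(-1)} = A$ holds if and only if $\chi(A^{(-1)}) = \chi(A)$ for all $\chi \in \wh{G}$, that is, if and only if $\overline{\chi(A)} = \chi(A)$ for all $\chi$, which says exactly that $\chi(A)$ is real for every character of $G$.

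The only genuine point requiring care is reconciling ``all characters'' with the statement's ``all nonprincipal characters.'' Here I expect the main (and essentially only) obstacle to be the bookkeeping around the principal character, which I would dispatch by observing that it contributes automatically: $\chi_{1_G}(A) = \sum_{g \in G} a_g$ is an integer, since each $a_g \in \Z$, and hence is always real. Thus requiring $\chi(A)$ to be real for all characters is equivalent to requiring it for all nonprincipal characters, so the two formulations agree and the corollary follows.
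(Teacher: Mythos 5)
Your proposal is correct and follows essentially the same route as the paper: part $(i)$ from the Fourier inversion formula (Proposition~\ref{prop-fourier}), and part $(ii)$ by combining part $(i)$ with Lemma~\ref{lemma-homlike}$(ii)$ and observing that $\chi_{1_G}(A)$ is an integer, hence automatically real. The paper's proof is merely a terser version of the same argument.
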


\begin{proof}
For part $(i)$, use Proposition~\ref{prop-fourier}.

For part $(ii)$, note that $\chi_{1_G}(A)$ is always an integer and so 
$\chi(A)$ is real for all nonprincipal characters $\chi$ of $G$ 
if and only if
$\chi(A)$ is real for all characters $\chi$ of~$G$.
Then use Proposition~\ref{prop-fourier} and Lemma~\ref{lemma-homlike}$(ii)$.
\end{proof}

We now have the necessary tools to explain how the character sum properties illustrated in Examples~\ref{ex-chardiffset} to~\ref{ex-charpartialdiffset} arise.
\begin{theorem}\label{thm-diffset}
Let $G$ be a group of order~$v$, let $D$ be a $k$-subset of~$G$,
and let $\la$ satisfy $k(k-1) = \la(v-1)$.
Then the subset $D$ is a $(v,k,\la)$ difference set in $G$ if and only if
\begin{equation}\label{eqn-dschar}
|\chi(D)|^2 = k-\la \quad \mbox{for all nonprincipal characters $\chi$ of $G$}.
\end{equation}
\end{theorem}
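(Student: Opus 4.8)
The plan is to convert the group-ring identity \eqref{eqn-diffset} that defines a difference set into an equivalent statement about character sums, using Corollary~\ref{cor-fourier}$(i)$ as the bridge. By that corollary, the identity $DD^{(-1)} = k + \la(G-1)$ in $\Z[G]$ holds if and only if $\chi(DD^{(-1)}) = \chi\big(k + \la(G-1)\big)$ for every character $\chi$ of $G$. So I would compute both sides of this scalar equation for an arbitrary $\chi$ and read off what it asserts.

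First I would evaluate the left-hand side: by Lemma~\ref{lemma-homlike}$(i)$ and $(ii)$, $\chi(DD^{(-1)}) = \chi(D)\chi(D^{(-1)}) = \chi(D)\overline{\chi(D)} = |\chi(D)|^2$. Next I would evaluate the right-hand side using linearity of the character sum together with Proposition~\ref{prop-orthogonality}$(i)$ applied with $N = G$ (noting $G^\perp = \{\chi_{1_G}\}$): since $\chi(1_G) = 1$ and $\chi(G)$ equals $v$ when $\chi$ is principal and $0$ otherwise, I get $\chi\big(k + \la(G-1)\big) = k + \la(v-1)$ for the principal character and $k - \la$ for every nonprincipal character.

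This splits the equivalence into two families of scalar equations. For a nonprincipal $\chi$, the equation reads exactly $|\chi(D)|^2 = k - \la$, which is the asserted condition \eqref{eqn-dschar}. For the principal character $\chi_{1_G}$, the left side is $|\chi_{1_G}(D)|^2 = k^2$ (because $\chi_{1_G}$ sends each of the $k$ elements of $D$ to $1$) and the right side is $k + \la(v-1)$; the hypothesis $k(k-1) = \la(v-1)$ is precisely the statement that these agree, so the principal-character equation holds automatically and imposes no constraint. Combining the two cases shows that \eqref{eqn-diffset} holds if and only if \eqref{eqn-dschar} holds, which is the theorem.

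The computation here is essentially bookkeeping, so the only thing to get right is the role of the principal character: I expect the main (minor) obstacle to be observing that the principal-character instance of the equivalence is not automatic but is made to hold exactly by the parameter relation $k(k-1) = \la(v-1)$ assumed in the hypothesis. This is why that relation appears as a standing assumption rather than as part of the conclusion, and why the character condition \eqref{eqn-dschar} needs to be stated only for nonprincipal characters.
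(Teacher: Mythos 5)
Your proposal is correct and follows essentially the same route as the paper's proof: translate the group-ring identity into character equations via Corollary~\ref{cor-fourier}$(i)$, simplify both sides using Lemma~\ref{lemma-homlike} and Proposition~\ref{prop-orthogonality}$(i)$, and observe that the principal-character equation is exactly the assumed relation $k(k-1)=\la(v-1)$ combined with $|\chi_{1_G}(D)|^2=k^2$. Your closing remark about why the parameter relation appears as a hypothesis matches the paper's own commentary.
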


\begin{proof}
As seen in Example~\ref{ex-diffset},
the subset $D$ is a $(v,k,\la)$ difference set in $G$ if and only if 
\begin{equation}\label{eqn-int1}
D D^{(-1)} = k + \la (G-1) \quad \mbox{in $\Z[G]$}.
\end{equation}
By Lemma~\ref{lemma-homlike}, we have $\chi(DD^{(-1)}) = \chi(D) \chi(D^{(-1)}) = \chi(D) \overline{\chi(D)} = |\chi(D)|^2$, so
using Corollary~\ref{cor-fourier}$(i)$ we find that \eqref{eqn-int1} is equivalent to 
\begin{equation}\label{eqn-int2}
|\chi(D)|^2  = \chi\big(k +\la (G-1)\big) \quad \mbox{for all characters $\chi$ of $G$}.
\end{equation}
Now 
$\chi\big(k + \la (G-1)\big) 
 = \chi\big( (k-\la)\cdot 1_G + \la G\big)
 = (k-\la) + \la \, \chi(G)$, so by Proposition~\ref{prop-orthogonality}$(i)$ we have that \eqref{eqn-int2} is equivalent to
\begin{equation}\label{eqn-int3}
|\chi(D)|^2 =
\begin{cases}
 k -\la + \la v & \mbox{for $\chi = \chi_{1_G}$}, \\
 k-\la        	& \mbox{for all nonprincipal characters $\chi$ of $G$}.
\end{cases}
\end{equation}
Since $D$ is a $k$-subset and $k(k-1)=\la(v-1)$ by assumption, we have
$|\chi_{1_G}(D)|^2 = k^2 = k-\la+\la v$, so \eqref{eqn-int3} is equivalent to
\[
|\chi(D)|^2 = k-\la \quad \mbox{for all nonprincipal characters $\chi$ of $G$}.
\]
\end{proof}

\begin{theorem}\label{thm-reldiffset}
Let $G$ be a group of order~$mn$, let $N$ be a subgroup of $G$ of order~$n$, let $R$ be a $k$-subset of $G$, and let $\la$ satisfy $k(k-1) = \la n(m-1)$.
Then the subset $R$ is an $(m,n,k,\la)$ relative difference set in $G$ relative to $N$ if and only if
\begin{equation}\label{eqn-rdschar}
|\chi(R)|^2 = \begin{cases}
k-\la n	& \mbox{for all $\chi \in N^\perp \setminus G^\perp$}, \\
k 	& \mbox{for all $\chi \notin N^\perp$}.
\end{cases}
\end{equation}
\end{theorem}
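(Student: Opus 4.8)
The plan is to follow the same template as the proof of Theorem~\ref{thm-diffset}, translating the group ring identity~\eqref{eqn-reldiffset} into a statement about character sums and then checking the cases. By Example~\ref{ex-reldiffset}, the subset $R$ is an $(m,n,k,\la)$ relative difference set in $G$ relative to $N$ if and only if $R R^{(-1)} = k + \la(G-N)$ in $\Z[G]$. Applying Lemma~\ref{lemma-homlike} gives $\chi(R R^{(-1)}) = \chi(R)\overline{\chi(R)} = |\chi(R)|^2$, so by Corollary~\ref{cor-fourier}$(i)$ this group ring identity is equivalent to
\[
|\chi(R)|^2 = \chi\big(k + \la(G-N)\big) \quad \mbox{for all characters $\chi$ of $G$}.
\]

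Next I would evaluate the right side as $\chi\big(k + \la(G-N)\big) = k + \la\,\chi(G) - \la\,\chi(N)$ and apply Proposition~\ref{prop-orthogonality}$(i)$, which forces a three-way case split according to whether $\chi$ lies in $G^\perp$, in $N^\perp \setminus G^\perp$, or outside $N^\perp$. Since $G^\perp = \{\chi_{1_G}\}$ and the principal character is automatically principal on $N$, these cases yield $\chi(G) = mn,\ \chi(N) = n$ for $\chi = \chi_{1_G}$; then $\chi(G) = 0,\ \chi(N) = n$ for $\chi \in N^\perp \setminus G^\perp$; and finally $\chi(G) = \chi(N) = 0$ for $\chi \notin N^\perp$. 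Substituting gives right-side values of $k + \la n(m-1)$, $k - \la n$, and $k$ respectively.

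Finally, I would dispose of the principal character case exactly as in Theorem~\ref{thm-diffset}: since $R$ is a $k$-subset we have $|\chi_{1_G}(R)|^2 = k^2$, and the hypothesis $k(k-1) = \la n(m-1)$ is precisely the identity $k^2 = k + \la n(m-1)$, so this case holds automatically and may be discarded. What remains is exactly~\eqref{eqn-rdschar}. I do not expect a genuine obstacle here; the proof is a routine adaptation of the difference set argument. The only points requiring care are the extra branch in the case split and the observation that any $\chi \notin N^\perp$ is necessarily nonprincipal, since $\chi_{1_G} \in G^\perp \subseteq N^\perp$, which is what guarantees $\chi(G) = 0$ in that final case.
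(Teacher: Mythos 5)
Your proposal is correct and follows essentially the same route as the paper's own proof: translate the group ring identity~\eqref{eqn-reldiffset} via Lemma~\ref{lemma-homlike} and Corollary~\ref{cor-fourier}$(i)$, apply the orthogonality relations to get the three-way case split, and discharge the principal character case using $|\chi_{1_G}(R)|^2 = k^2 = k + \la n(m-1)$. The case values you compute all match the paper's.
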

\begin{proof}
As seen in Example~\ref{ex-reldiffset},
the subset $R$ is an $(m,n,k,\la)$ relative difference set in $G$ relative to $N$ if and only if 
\[
R R^{(-1)} = k + \la (G-N) \quad \mbox{in $\Z[G]$}.
\]
By Lemma~\ref{lemma-homlike} and Corollary~\ref{cor-fourier}$(i)$, this is equivalent to
\[
|\chi(R)|^2  = k + \la \big(\chi(G) - \chi(N) \big) \quad \mbox{for all characters $\chi$ of $G$},
\]
which by Proposition~\ref{prop-orthogonality}$(i)$ is equivalent to 
\[
|\chi(R)|^2  = 
\begin{cases}
 k +\la n(m-1)  & \mbox{for $\chi = \chi_{1_G}$}, \\
 k-\la n       	& \mbox{for all $\chi \in N^\perp \setminus G^\perp$}, \\
 k        	& \mbox{for all $\chi \notin N^\perp$}.
\end{cases}
\]
Since $R$ is a $k$-subset and $k(k-1)=\la n(m-1)$ by assumption, we have
$|\chi_{1_G}(R)|^2 = k^2 = k+\la n(m-1)$ and so obtained the desired result.
\end{proof}

\begin{theorem}\label{thm-partialdiffset}
Let $G$ be a group of order~$v$, let $D$ be a $k$-subset of $G$ not containing~$1_G$, and let $\la, \mu$ satisfy $k(k-1)=\la k + \mu(v-1-k)$ and $(\la-\mu)^2+4(k-\mu) \ge 0$.
Then the subset $D$ is a $(v,k,\la,\mu)$ partial difference set in $G$ satisfying $D^{(-1)}=D$ if and only if 
\begin{equation}\label{eqn-pdschar}
\chi(D) = \frac{1}{2} \Big( \la-\mu \pm \sqrt{(\la-\mu)^2+4(k-\mu)} \Big)
\mbox{ for all nonprincipal characters $\chi$ of $G$}.
\end{equation}
\end{theorem}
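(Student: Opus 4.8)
The plan is to follow the template of the proofs of Theorems~\ref{thm-diffset} and~\ref{thm-reldiffset}: translate the defining group-ring identity for a partial difference set into a statement about character sums using Corollary~\ref{cor-fourier}$(i)$, and encode the extra symmetry condition $D^{(-1)} = D$ using Corollary~\ref{cor-fourier}$(ii)$. As recalled from Example~\ref{ex-partialdiffset}, the subset $D$ is a $(v,k,\la,\mu)$ partial difference set exactly when, after gathering the terms in~\eqref{eqn-partialdiffset},
\[
D D^{(-1)} = (k-\mu)\cdot 1_G + (\la-\mu) D + \mu G \quad \mbox{in $\Z[G]$}.
\]
First I would observe that, assuming $D^{(-1)} = D$, the left side becomes $D^2$, so that applying an arbitrary character $\chi$ and using Lemma~\ref{lemma-homlike}$(i)$ converts the identity into a family of scalar equations in~$\chi(D)$.

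Next I would evaluate these scalar equations separately for nonprincipal and principal characters, using Proposition~\ref{prop-orthogonality}$(i)$ to compute $\chi(G)$. For a nonprincipal character $\chi$ we have $\chi(G) = 0$ and $\chi(1_G) = 1$, so the identity reduces to the quadratic
\[
\chi(D)^2 - (\la-\mu)\chi(D) - (k-\mu) = 0,
\]
whose two roots are precisely the values $\frac{1}{2}\big(\la-\mu \pm \sqrt{(\la-\mu)^2+4(k-\mu)}\big)$ appearing in~\eqref{eqn-pdschar}; the hypothesis $(\la-\mu)^2+4(k-\mu) \ge 0$ guarantees that these roots are real. For the principal character we have $\chi_{1_G}(D) = k$ and $\chi_{1_G}(G) = v$, so the scalar equation reads $k^2 = (k-\mu) + (\la-\mu)k + \mu v$, which I would verify is an automatic consequence of the parameter relation $k(k-1) = \la k + \mu(v-1-k)$ by a line of algebra. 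Thus the principal character imposes no further constraint.

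To assemble the equivalence, for the forward direction I would note that a partial difference set with $D^{(-1)}=D$ forces each nonprincipal $\chi(D)$ to be one of the two roots above, giving~\eqref{eqn-pdschar}. For the converse, if~\eqref{eqn-pdschar} holds then each nonprincipal $\chi(D)$ is real, so $D^{(-1)}=D$ by Corollary~\ref{cor-fourier}$(ii)$; moreover each such $\chi(D)$ satisfies the quadratic relation, and together with the verified principal-character equation this matches $\chi\big((k-\mu)\cdot 1_G + (\la-\mu)D + \mu G\big)$ for every character, so Corollary~\ref{cor-fourier}$(i)$ returns the defining group-ring identity. The one point requiring care---and the closest thing to an obstacle---is that~\eqref{eqn-pdschar} asserts only that \emph{each} $\chi(D)$ is one of two admissible roots, not that all characters share a common value; the argument must therefore treat the two roots symmetrically, which it does because both satisfy the same quadratic. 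The simultaneous handling of the partial difference set identity and the symmetry $D^{(-1)}=D$ is exactly what the reality of the discriminant makes possible.
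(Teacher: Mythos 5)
Your proposal is correct and follows essentially the same route as the paper's proof: both deduce $D^{(-1)}=D$ from the reality of \eqref{eqn-pdschar} via Corollary~\ref{cor-fourier}$(ii)$, convert the group ring identity \eqref{eqn-partialdiffset} into the quadratic $\chi(D)^2=(k-\mu)+(\la-\mu)\chi(D)+\mu\chi(G)$ using Lemma~\ref{lemma-homlike}$(i)$ and Corollary~\ref{cor-fourier}$(i)$, and dispose of the principal character case with the counting relation $k(k-1)=\la k+\mu(v-1-k)$. Your closing observation that the two roots need not agree across different characters is a sound point of care, handled implicitly in the paper by phrasing everything as a chain of equivalences.
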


\begin{proof}
Since $(\la-\mu)^2+4(k-\mu) \ge 0$ by assumption, \eqref{eqn-pdschar} implies that $\chi(D)$ is real for all nonprincipal characters $\chi$ of~$G$ and therefore by Corollary~\ref{cor-fourier}$(ii)$ that $D^{(-1)}=D$. 
We may therefore take $D^{(-1)}=D$ to be an assumption applying throughout the statement of the theorem.

As seen in Example~\ref{ex-partialdiffset},
the subset $D$ is a $(v,k,\la,\mu)$ partial difference set in $G$ if and only if
\[
D D^{(-1)} = k + \la D + \mu (G-1-D) \quad \mbox{in $\Z[G]$}.
\]
Using $D^{(-1)} = D$, this is equivalent to
\[
D^2 = k + \la D + \mu (G-1-D) \quad \mbox{in $\Z[G]$},
\]
which by Corollary~\ref{cor-fourier}$(i)$ and Lemma~\ref{lemma-homlike}$(i)$ is equivalent to
\[
\big(\chi(D)\big)^2 = k -\mu + (\la-\mu) \chi(D) + \mu \, \chi(G) \quad \mbox{for all characters $\chi$ of $G$}.
\]
Using Proposition~\ref{prop-orthogonality}$(i)$, this is equivalent to
\begin{equation}\label{eqn-D2}
\big(\chi(D)\big)^2 = \begin{cases}
  k + \la k + \mu(v-1-k) 	& \mbox{for $\chi = \chi_{1_G}$}, \\
  k-\mu + (\la - \mu)\chi(D)	& \mbox{for all nonprincipal characters $\chi$ of $G$}.
\end{cases}
\end{equation}
Since $k(k-1)=\la k + \mu(v-1-k)$ by assumption, we have 
$\big(\chi_{1_G}(D)\big)^2 = k^2 = k + \la k + \mu(v-1-k)$, so \eqref{eqn-D2} is equivalent to
\[
\big(\chi(D)\big)^2 = k-\mu + (\la - \mu)\chi(D) \quad \mbox{for all nonprincipal characters $\chi$ of $G$}.
\]
This is equivalent to \eqref{eqn-pdschar}, by considering the solutions of the above quadratic equation in $\chi(D)$ for each~$\chi$.
\end{proof}

\begin{remark}
Each character sum of a difference set $D$ or relative difference set $R$ is
a sum $X$ of $m$-th roots of unity for some integer $m$, and therefore an algebraic integer in $\Z[\zeta_m]$.
Theorems~\ref{thm-diffset} and~\ref{thm-reldiffset} characterize $D$ and $R$ by constraining the value of $X \overline{X}$ to be an integer, enabling powerful methods from algebraic number theory to be applied to analyze the existence of (relative) difference sets.
This technique was pioneered by Turyn \cite{Tur65}, significantly extended by Schmidt \cite{Sch99,Sch02} using the field descent method, and further developed by Leung and Schmidt~\cite{LS}.
\end{remark}

\begin{remark}
The character descriptions
\eqref{eqn-dschar}, \eqref{eqn-rdschar}, \eqref{eqn-pdschar}
in Theorems
\ref{thm-diffset}, \ref{thm-reldiffset}, \ref{thm-partialdiffset} 
do not imply that the associated group ring element has coefficients in~$\{0,1\}$.
For this reason, we must include as an assumption in the theorems that we begin with a $k$-subset.
\end{remark}

\begin{remark}
The parameter relations
$k(k-1) = \la(v-1)$ and
$k(k-1) = \la n(m-1)$ and
$k(k-1)=\la k + \mu(v-1-k)$ 
given as assumptions in 
Theorems~\ref{thm-diffset} to~\ref{thm-partialdiffset} 
are the same as the counting relations described in
Examples~\ref{ex-diffset} to~\ref{ex-partialdiffset}, respectively.
Including these relations as assumptions in
Theorems~\ref{thm-diffset} to~\ref{thm-partialdiffset} 
allows us to simplify the statement of the theorems
to refer only to nonprincipal characters.
\end{remark}

We next use character theory to analyze the condition that a difference set admits a numerical multiplier.

\begin{example}[Character viewpoint for numerical multiplier]
Suppose that $t$ is a numerical multiplier of a difference set $D$ in an abelian group~$G$.
We shall show that this assumption imposes strong structural constraints on $D$, or equivalently on its character sums.

The right translate of $D$ by a group element $g \in G$ is
$D+g=\{ d+g \mid d \in D\}$.
As seen in Example~\ref{ex-nummult}, the integer $t$ induces a group automorphism 
$\sig_t: x \mapsto x^t$ in $\Aut(G)$, and $\sig_t(D) = D^{(t)}$.
There is necessarily a right translate of $D$ that is fixed by $\sig_t$ \cite[Chapter VI, Theorem 2.6]{BJL}. Since the right translate of a difference set is also a difference set with the same parameters, we may assume that
\begin{equation}\label{eqn-DtD}
D^{(t)}=D \quad \mbox{in $\Z[G]$}. 
\end{equation}
Therefore $D$ is necessarily formed as a union of orbits under the action of~$\sig_t$, and this property can often be used to construct an example of such a difference set $D$ or else to show that it cannot exist.

By Corollary~\ref{cor-fourier}$(i)$ and Lemma~\ref{lemma-homlike}$(iii)$
(and the definition of $\wti{\sig_t}$ given in Lemma~\ref{lemma-homlike}),
\eqref{eqn-DtD} is equivalent to
\[
\wti{\sig_t}\big(\chi(D)\big)=\chi(D) \quad \mbox{for all characters $\chi$ of $G$},
\]
which constrains all the character sums of $D$ to be fixed by~$\wti{\sig_t}$.
\end{example}

\begin{remark}
The set of right translates of a difference set in an abelian group $G$ is important in design theory: it forms the block set of a symmetric balanced incomplete block design with a regular automorphism group~$G$ \cite[Theorem 3.8]{S}.
\end{remark}

We conclude this section by showing that the Walsh-Hadamard transform occurring in the study of Boolean functions can be expressed in terms of character sums.

\begin{example}[Boolean functions and Walsh-Hadamard transform]
Let $n$ be a positive integer. A \emph{Boolean function} in $n$ variables is a function $f: \F_2^n \rightarrow \F_2$.
The \emph{Walsh-Hadamard transform} of a Boolean function $f$ at $a \in \F_2^n$ is 
\[
\wh{\chi_f}(a)=\sum_{x \in \F_2^n}(-1)^{f(x)+a \cdot x},
\]
where $a \cdot x$ is the usual dot product. 
A Boolean function $f$ in $n$ variables is \emph{bent} if its Walsh-Hadamard transform satisfies 
\[
\wh{\chi_f}(a) \in \{-2^{\frac{n}{2}}, 2^{\frac{n}{2}}\} \quad \mbox{for each $a \in \F_2^n$}. 
\]
A bent function has the largest possible distance from the set of all linear functions. See \cite{C} for a comprehensive treatment of Boolean functions, bent functions, and their applications.

A Boolean function $f: \F_2^n \rightarrow \F_2$ can be associated with the group ring element $D_f \in \Z[\F_2^n]$ given by
\[
D_f=\sum_{x \in \F_2^n \,:\, f(x)=1} x.
\]
The group ring element $D_f$ retains all information about~$f$. We use $|D_f|$ to represent the size of the subset of $\F_2^n$ associated with~$D_f$. 
By replacing $f$ with $1+f$ if necessary, we may assume that 
\begin{equation} \label{eqn:Dfsmaller}
|D_f| \le 2^{n-1}. 
\end{equation}
We connect the Walsh-Hadamard transform of $f$ to the character sums of $D_f$ in the following way:
\begin{align}
\wh{\chi_f}(a)
 &= \sum_{x \in \F_2^n}(-1)^{f(x)+a \cdot x} \nonumber \\
 &= \sum_{x \in \F_2^n \,:\, f(x)=0} (-1)^{a \cdot x}-\sum_{x \in \F_2^n \,:\, f(x)=1} (-1)^{a \cdot x} \nonumber \\
 &= \sum_{x \in \F_2^n} (-1)^{a \cdot x}-2\sum_{x \in \F_2^n \,:\, f(x)=1} (-1)^{a \cdot x}\nonumber  \\
 &= \chi_a(\F_2^n)-2\chi_a(D_f), \label{eqn:cfa}
\end{align}                    
where $\chi_a$ is a character of the additive group $\Z_2^n$ of $\F_2^n$, defined as in Example~\ref{ex-EAdot}. 

Now $\chi_a$ is the principal character of $\Z_2^n$ exactly when $a$ is zero in $\F_2^n$, so by Proposition~\ref{prop-orthogonality}(i) we have
\[
\chi_a(\F_2^n) =\begin{cases}
                                                                                                      2^n & \mbox{if $a$ is zero in $\F_2^n$} \\
                                                                                                      0 & \mbox{if $a$ is nonzero in $\F_2^n$}  
                                                                                                   \end{cases}       
\]         
Substitution in \eqref{eqn:cfa} then gives
\[
\wh{\chi_f}(a) =
 \begin{cases}
    2^n-2|D_f| & \mbox{if $a$ is zero in $\F_2^n$} \\
   -2\chi_a(D_f) & \mbox{if $a$ is nonzero in $\F_2^n$}.
 \end{cases}      
\]
We then see that the Walsh-Hadamard transforms of $f$ can be expressed in terms of the character sums of~$D_f$.  
In particular, under the assumption \eqref{eqn:Dfsmaller}, the Boolean function $f: \F_2^n \rightarrow \F_2$ is a bent function if and only if 
\[
|D_f|=2^{n-1}-2^{\frac{n}{2}-1} \quad \mbox{and} \quad \mbox{$|\chi_a(D_f)|=2^{\frac{n}{2}-1}$ for each nonzero $a \in \F_2^n$}. 
\]
By Theorem~\ref{thm-diffset}, this is in turn equivalent to the statement that $D_f$ is a $(2^n,2^{n-1}-2^{\frac{n}{2}-1},2^{n-2}-2^{\frac{n}{2}-1})$ difference set in~$\F_2^n$.  
\end{example}

\section{Collections of subsets}\label{sec-subsets}
In this section, we consider certain collections of subsets of an abelian group whose mutual properties play a fundamental role in the construction of difference sets and related structures. These collections are:
the hyperplanes of an elementary abelian group;
a spread of an elementary abelian group; 
and an LP-packing of partial difference sets in an abelian group.
We characterize each collection using character sums.

\begin{example}[Hyperplanes of elementary abelian group]\label{ex-hyperplanes}
Let $G=\Z_p^n$ and let
$a \cdot x$ be the usual dot product of $a$ and~$x$ in~$G$.
The \emph{hyperplanes} of $G$ are the $p^n-1$ subgroups
\[
 H_a=\{x \in G \, \mid \, a \cdot x =0 \}
\] 
as $a$ ranges over the nonidentity elements of~$G$.
The order $p$ cyclic subgroup of $G$ generated by nonidentity $a \in G$ is
\[
\pointy{a} = \{ \gamma a \mid \gamma \in \Z_p\}.
\]
Regarding $G$ as an $n$-dimensional vector space over $\Z_p$, the hyperplanes are the $(n-1)$-dimensional subspaces of~$G$ and $\pointy{a}$ is the set of 
scalar multiples of~$a$.
The hyperplanes of an elementary abelian group are a crucial ingredient in the construction of McFarland difference sets \cite{D,Mc}, and in the construction of other families of $(v,k,\la)$ difference sets satisfying $\gcd(v,k-\la) \ne 1$
as well as certain families of relative difference sets~\cite{DJ}.

Let $\wti{H_a}$ be the orthogonal complement of $H_a$ in~$G$ (where we reserve the symbol $^\perp$ for the set of characters principal on a subgroup).
Since $\pointy{a} \subseteq \wti{H_a}$ and $\dim(\wti{H_a}) = 1$, we see that
$\wti{H_a} = \pointy{a}$. Therefore
\begin{equation}\label{eqn-permanent}
H_a = H_b \quad \mbox{if and only if} \quad \pointy{a} = \pointy{b}.
\end{equation}

By Definition~\ref{def-charcalc}, the characters of $G$ are the functions $\chi_c$ for $c \in G$, where 
\[
\chi_c(x) = \zeta_p^{c \cdot x} \quad \mbox{for each $x \in G$}.
\]
Therefore
\[
H_a^{\perp} 
  = \{ \chi_c \mid c \cdot x = 0 \mbox{ for all $x \in H_a$} \} \\
  = \{ \chi_c \mid c \in \wti{H_a} \} \\
  = \{ \chi_c \mid c \in \pointy{a} \},
\]
using $\wti{H_a} = \pointy{a}$.
It follows from Proposition~\ref{prop-orthogonality}(i) that 
for each $c \in G$ and each nonidentity $a \in G$, 
\begin{equation}
\chi_c(H_a) 
 =\begin{cases}
	p^{n-1} & \mbox{if $c \in \pointy{a}$}, \\
	0       & \mbox{if $c \notin \pointy{a}$}.
  \end{cases} \label{eqn-chicHa}  \\
\end{equation}
Using \eqref{eqn-permanent}, this shows that each nonprincipal character of $G$ is principal on exactly one hyperplane.

Using $\wti{H_a} = \pointy{a}$ again, we can rewrite \eqref{eqn-chicHa} as
\[
\chi_c(H_a) 
 =\begin{cases}
	p^{n-1} & \mbox{if $c \in \wti{H_a}$}, \\ 
	0       & \mbox{if $c \notin \wti{H_a}$}
  \end{cases} 
\]
and thereby associate the nonprincipal characters of~$G$ with the (orthogonal complements of the) hyperplanes of~$G$.

Finally, we claim that
\begin{equation}\label{eqn-mcf}
H_aH_b=\begin{cases}
	p^{n-1}H_a & \mbox{if $H_a = H_b$,} \\
	p^{n-2}G   & \mbox{if $H_a \ne H_b$}.
                          \end{cases}       
\end{equation}
The case $H_a = H_b$ of \eqref{eqn-mcf} holds because
$H_a$ is a subgroup of $G$ of order~$p^{n-1}$, and $hH_a = H_a$ for each $h \in H_a$.
For the case $H_a \ne H_b$ of \eqref{eqn-mcf}, let $c \in G$. 
By Lemma~\ref{lemma-homlike}$(i)$ and \eqref{eqn-permanent} and \eqref{eqn-chicHa},
\begin{align*}
\chi_c(H_aH_b)
 &=\begin{cases}
	p^{n-1} \cdot p^{n-1} 	& \mbox{if $c=1_G$,} \\
	0        		& \mbox{if $c \ne 1_G$} 
   \end{cases} \\
 &= \chi_c(p^{n-2}G)
\end{align*}
using Proposition~\ref{prop-orthogonality}$(i)$.
Since this holds for all $\chi_c \in \wh{G}$, by Corollary~\ref{cor-fourier}$(i)$ we obtain $H_aH_b=p^{n-2}G$ for $H_a \ne H_b$ as required.
\end{example}

\begin{example}[Spread of elementary abelian group]
Let $G=\Z_p^{2n}$, and let $H_0, H_1, \dots,$ $H_{p^n}$ be a collection of order $p^n$ subgroups of~$G$. The subgroups $H_0, H_1, \dots, H_{p^n}$ form a \emph{spread} in $G$ if
\begin{equation}\label{eqn-spread1}
H_i \cap H_j = \{ 1_G \} \mbox{ for all distinct $i,j$}.
\end{equation}
That is, every two distinct subgroups of a spread intersect only in the identity element. 
A spread of an elementary abelian group occurs in many contexts of coding theory, design theory, and finite geometry~\cite{Dillon,Johnson}.

By a counting argument, \eqref{eqn-spread1} is equivalent to the group ring condition
\begin{equation}\label{eqn-spread2}
\sum_{i=0}^{p^n} H_i = p^n + G \quad \mbox{in $\Z[G]$}.
\end{equation}
By Corollary~\ref{cor-fourier}$(i)$ and Proposition~\ref{prop-orthogonality}$(i)$, 
condition \eqref{eqn-spread2} is equivalent to
\begin{equation}\label{eqn-spread3}
\sum_{i=0}^{p^n} \chi(H_i) = p^n \quad \mbox{for all nonprincipal characters $\chi$ of $G$}.
\end{equation}
By Proposition~\ref{prop-orthogonality}$(i)$, we have $\chi(H_i) \in \{0,p^n\}$ for each $i$ and so \eqref{eqn-spread3} is equivalent to the multiset equality
\begin{align}
 & \{\{ \chi(H_0), \chi(H_1),\dots, \chi(H_{p^n}) \}\}= \{\{ p^n, 0, \ldots, 0\}\} \nonumber \\
 & \hspace{14em} \mbox{for all nonprincipal characters $\chi$ of $G$}. \label{eqn-spread4}
\end{align}
Therefore each nonprincipal character of $G$ is principal on exactly one of the $p^n+1$ subgroups of a spread in~$G$. 
(From Example~\ref{ex-hyperplanes}, the hyperplanes of $\Z_p^{2n}$ have the similar property that each nonprincipal character of $\Z_p^{2n}$ is principal on exactly one of the $\frac{p^{2n}-1}{p-1}$ hyperplanes; 
but every two distinct hyperplanes of $G = \Z_p^{2n}$ intersect in a $(2n-2)$-dimensional subspace, whereas the subgroups of a spread intersect in only the identity element.)

Suppose that $H_0, H_1, \dots, H_{p^n}$ is a spread in~$G$.
Each $H_i^\perp$ is a subgroup of $\wh{G}$, which by Result~\ref{res-inducedchar} has order $|H_i^\perp|=\frac{|G|}{|H_i|}=p^n$. 
By \eqref{eqn-spread4}, we have $H_i^\perp \cap H_j^\perp=\{ \chi_{1_G} \}$ for all distinct $i,j$. 
Therefore the collection $H_0^\perp, H_1^\perp, \dots, H_{p^n}^\perp$ is a spread in $\wh{G}$ that is dual to the spread $H_0, H_1, \dots, H_{p^n}$ in~$G$.
\end{example}

\begin{example}[LP-packing of partial difference sets in abelian group]
Let $t>1$ and $c>0$ be integers. Let $G$ be an abelian group of order~$t^2c^2$, and let $U$ be a subgroup of $G$ of order~$tc$. 
Let $P_1,\dots,P_t$ be a collection of $c(tc-1)$-subsets of~$G$ not containing~$1_G$.
The subsets $P_1,\dots,P_t$ form a
$(c,t)$ \emph{LP-packing in $G$ relative to~$U$} (as introduced in \cite{JL21},
where ``LP-packing'' is shorthand for ``a packing of Latin square type Partial difference sets'') if each $P_i$ is a $(t^2c^2,c(tc-1),c(t+c-3),c(c-1))$ partial difference set in~$G$
satisfying $P_i^{(-1)} = P_i$,
and
\begin{equation}\label{eqn-LPpacking}
\sum_{i=1}^t P_i = G-U.
\end{equation}
Since each $P_i$ is a $c(tc-1)$-subset of $G$ and $|G \setminus U|=tc(tc-1)$, condition \eqref{eqn-LPpacking} is equivalent to the statement that the subsets $P_i$ are disjoint and their union is~$G \setminus U$.

We now use character sums to characterize a $(c,t)$ LP-packing.
By Theorem~\ref{thm-partialdiffset}, the condition that
each $P_i$ is a $(t^2c^2,c(tc-1),c(t+c-3),c(c-1))$ partial difference set in~$G$
satisfying $P_i^{(-1)} = P_i$ is equivalent to
\begin{equation}\label{eqn-LPequiv1}
\chi(P_i) \in \{-c,(t-1)c\} \quad \mbox{for each $i$ and for all nonprincipal characters $\chi$ of $G$}.
\end{equation}
By Corollary~\ref{cor-fourier}$(i)$ and Proposition~\ref{prop-orthogonality}$(i)$ and the given values of $|P_i|$ and $|G \setminus U|$, condition \eqref{eqn-LPpacking} is equivalent to 
\begin{equation}\label{eqn-LPequiv2}
\sum_{i=1}^t \chi(P_i) = \begin{cases}
  -tc & \mbox{for all $\chi \in U^\perp \setminus G^\perp$,} \\
  0   & \mbox{for all $\chi \notin U^\perp$}.
\end{cases} 
\end{equation}
Conditions \eqref{eqn-LPequiv1} and \eqref{eqn-LPequiv2} are equivalent to the multiset equality
\begin{equation}
\label{eqn-U}
\{\{\chi(P_1),\dots,\chi(P_t)\}\} = \begin{cases}
  \{\{-c,\dots, -c\}\}          & \mbox{for all $\chi \in U^\perp \setminus G^\perp$,} \\
  \{\{(t-1)c, -c, \dots, -c\}\} & \mbox{for all $\chi \notin U^\perp$},
\end{cases}
\end{equation}
which is therefore a necessary and sufficient condition for the subsets $P_1, \dots, P_t$ to form a $(c,t)$ LP-packing.

Whereas a spread can exist only in an elementary abelian group \cite[Theorems 3.1, 3.4]{J89}, an LP-packing can be constructed in various nonelementary abelian groups: for prime $p$ and positive integers $a,s$, there is a $(p^{(a-1)s},p^s)$ LP-packing in $\Z_{p^a}^{2s}$ relative to an arbitrary subgroup of order $p^{as}$ \cite[Theorem 5.3]{JL21}.
This is significant because an LP-packing can be viewed as a generalization of a spread: as we now show, 
a simple transformation of a spread in $\Z_p^{2n}$ produces a $(1,p^n)$ LP-packing. 

Suppose that $H_0, H_1, \dots, H_{p^n}$ is a spread in $K=\Z_{p}^{2n}$. 
Then by removing $\chi(H_0)$ from the left side of \eqref{eqn-spread4} we obtain
\[
\{\{ \chi(H_1),\dots, \chi(H_{p^n}) \}\}= \begin{cases}
	\{\{ 0, 0, \ldots, 0\}\} 	& \mbox{for all $\chi \in H_0^\perp \setminus K^\perp$} \\
	\{\{ p^n, 0, \ldots, 0\}\} 	& \mbox{for all $\chi \notin H_0^\perp$}.
 \end{cases}
\]
Therefore
\[
\{\{ \chi(H_1-1),\dots, \chi(H_{p^n}-1) \}\}= \begin{cases}
	\{\{ -1, -1, \ldots, -1\}\} 	& \mbox{for all $\chi \in H_0^\perp \setminus K^\perp$} \\
	\{\{ p^n-1, -1, \ldots, -1\}\} 	& \mbox{for all $\chi \notin H_0^\perp$},
 \end{cases}
\]
and so by \eqref{eqn-U} the $(p^n-1)$-subsets
$H_1-1_K,\dots, H_{p^n}-1_K$ of $K$ (each not containing $1_K$) form a $(1,p^n)$ LP-packing in $K$ relative to~$H_0 \cong \Z_p^n$.
\end{example}

\section*{Acknowledgments}
We are grateful to Sophie Huczynska, whose enquiry about an introductory reference on group rings and character sums at the \emph{Stinson66} conference motivated us to write this expository paper. We appreciate her careful reading and constructive feedback on an earlier version of the paper. We are also grateful to the reviewers for their very detailed and helpful comments.

Jonathan Jedwab is supported by an NSERC Discovery Grant. Shuxing Li is supported by a PIMS Postdoctoral Fellowship, and received PIMS support to attend the \emph{Stinson66} conference.


\begin{thebibliography}{10}
\providecommand{\url}[1]{{#1}}
\providecommand{\urlprefix}{URL }
\expandafter\ifx\csname urlstyle\endcsname\relax
  \providecommand{\doi}[1]{DOI~\discretionary{}{}{}#1}\else
  \providecommand{\doi}{DOI~\discretionary{}{}{}\begingroup
  \urlstyle{rm}\Url}\fi

\bibitem{ADJ95}
Arasu, K.T., Davis, J.A., Jedwab, J.: A nonexistence result for abelian {M}enon
  difference sets using perfect binary arrays.
\newblock Combinatorica \textbf{15}(3), 311--317 (1995)

\bibitem{BJL}
Beth, T., Jungnickel, D., Lenz, H.: Design theory. {V}ol. {I},
  \emph{Encyclopedia of Mathematics and its Applications}, vol.~69, second edn.
\newblock Cambridge University Press, Cambridge (1999)

\bibitem{C}
Carlet, C.: Boolean Functions for Cryptography and Coding Theory.
\newblock Cambridge University Press, Cambridge (2021)

\bibitem{DJ97}
Davis, J.A., Jedwab, J.: Nested {H}adamard difference sets.
\newblock J. Statist. Plann. Inference \textbf{62}(1), 13--20 (1997)

\bibitem{DJ}
Davis, J.A., Jedwab, J.: A unified approach to difference sets with {${\rm
  gcd}(v,n)>1$}.
\newblock In: Difference sets, sequences and their correlation properties
  ({B}ad {W}indsheim, 1998), \emph{NATO Adv. Sci. Inst. Ser. C: Math. Phys.
  Sci.}, vol. 542, pp. 85--112. Kluwer Acad. Publ., Dordrecht (1999)

\bibitem{Dillon}
Dillon, J.F.: Elementary {H}adamard difference sets.
\newblock Ph. {D}. {T}hesis, University of Maryland (1974)

\bibitem{D}
Dillon, J.F.: Variations on a scheme of {M}c{F}arland for noncyclic difference
  sets.
\newblock J. Combin. Theory Ser. A \textbf{40}(1), 9--21 (1985)

\bibitem{GJ03}
Ghinelli, D., Jungnickel, D.: Finite projective planes with a large abelian
  group.
\newblock In: Surveys in combinatorics, 2003 ({B}angor), \emph{London Math.
  Soc. Lecture Note Ser.}, vol. 307, pp. 175--237. Cambridge Univ. Press,
  Cambridge (2003)

\bibitem{GJ06}
Ghinelli, D., Jungnickel, D.: Some geometric aspects of finite abelian group.
\newblock Rend. Mat. Appl. (7) \textbf{26}(1), 29--68 (2006)

\bibitem{GS}
Gordon, D.M., Schmidt, B.: A survey of the multiplier conjecture.
\newblock Des. Codes Cryptogr. \textbf{78}(1), 221--236 (2016)

\bibitem{JL01}
James, G., Liebeck, M.: Representations and characters of groups, second edn.
\newblock Cambridge University Press, New York (2001)

\bibitem{JL21}
Jedwab, J., Li, S.: Packings of partial difference sets.
\newblock Comb. Theory \textbf{1}, Paper No. 18, 41 (2021)

\bibitem{Johnson}
Johnson, N.L.: Combinatorics of spreads and parallelisms, \emph{Pure and
  Applied Mathematics (Boca Raton)}, vol. 295.
\newblock CRC Press, Boca Raton, FL (2010)

\bibitem{J89}
Jungnickel, D.: Existence results for translation nets. {II}.
\newblock J. Algebra \textbf{122}(2), 288--298 (1989)

\bibitem{J}
Jungnickel, D.: Difference sets.
\newblock In: Contemporary design theory, Wiley-Intersci. Ser. Discrete Math.
  Optim., pp. 241--324. Wiley, New York (1992)

\bibitem{JPNATO}
Jungnickel, D., Pott, A.: Difference sets: an introduction.
\newblock In: Difference sets, sequences and their correlation properties
  ({B}ad {W}indsheim, 1998), \emph{NATO Adv. Sci. Inst. Ser. C: Math. Phys.
  Sci.}, vol. 542, pp. 259--295. Kluwer Acad. Publ., Dordrecht (1999)

\bibitem{JS97}
Jungnickel, D., Schmidt, B.: Difference sets: an update.
\newblock In: Geometry, combinatorial designs and related structures
  ({S}petses, 1996), \emph{London Math. Soc. Lecture Note Ser.}, vol. 245, pp.
  89--112. Cambridge Univ. Press, Cambridge (1997)

\bibitem{JS98}
Jungnickel, D., Schmidt, B.: Difference sets: a second update.
\newblock Rend. Circ. Mat. Palermo (2) Suppl. \textbf{53}, 89--118 (1998).
\newblock Combinatorics '98 (Mondello)

\bibitem{LL}
Lam, T.Y., Leung, K.H.: On vanishing sums of roots of unity.
\newblock J. Algebra \textbf{224}(1), 91--109 (2000)

\bibitem{LS}
Leung, K.H., Schmidt, B.: The field descent method.
\newblock Des. Codes Cryptogr. \textbf{36}(2), 171--188 (2005)

\bibitem{LPR}
Li, S., Pott, A., Sch\"{u}ler, R.: Formal duality in finite abelian groups.
\newblock J. Combin. Theory Ser. A \textbf{162}, 354--405 (2019)

\bibitem{Ma}
Ma, S.L.: A survey of partial difference sets.
\newblock Des. Codes Cryptogr. \textbf{4}(3), 221--261 (1994)

\bibitem{Mc}
McFarland, R.L.: A family of difference sets in non-cyclic groups.
\newblock J. Combin. Theory Ser. A \textbf{15}, 1--10 (1973)

\bibitem{MWX}
Momihara, K., Wang, Q., Xiang, Q.: Cyclotomy, difference sets, sequences with
  low correlation, strongly regular graphs and related geometric substructures.
\newblock In: Combinatorics and finite fields---difference sets, polynomials,
  pseudorandomness and applications, \emph{Radon Ser. Comput. Appl. Math.},
  vol.~23, pp. 173--198. De Gruyter, Berlin (2019)

\bibitem{Pott95}
Pott, A.: Finite geometry and character theory, \emph{Lecture Notes in
  Mathematics}, vol. 1601.
\newblock Springer-Verlag, Berlin (1995)

\bibitem{Pott96}
Pott, A.: A survey on relative difference sets.
\newblock In: Groups, difference sets, and the {M}onster ({C}olumbus, {OH},
  1993), \emph{Ohio State Univ. Math. Res. Inst. Publ.}, vol.~4, pp. 195--232.
  de Gruyter, Berlin (1996)

\bibitem{Pott04}
Pott, A.: Nonlinear functions in abelian groups and relative difference sets.
\newblock Discrete Appl. Math. \textbf{138}(1--2), 177--193 (2004)

\bibitem{Pott16}
Pott, A.: Almost perfect and planar functions.
\newblock Des. Codes Cryptogr. \textbf{78}(1), 141--195 (2016)

\bibitem{Sch99}
Schmidt, B.: Cyclotomic integers and finite geometry.
\newblock J. Amer. Math. Soc. \textbf{12}(4), 929--952 (1999)

\bibitem{Sch02}
Schmidt, B.: Characters and cyclotomic fields in finite geometry, \emph{Lecture
  Notes in Mathematics}, vol. 1797.
\newblock Springer-Verlag, Berlin (2002)

\bibitem{S}
Stinson, D.R.: Combinatorial designs: constructions and analysis.
\newblock Springer-Verlag, New York (2004)

\bibitem{Tur65}
Turyn, R.J.: Character sums and difference sets.
\newblock Pacific J. Math. \textbf{15}, 319--346 (1965)

\end{thebibliography}

\end{document}